\renewcommand{\theequation}{\arabic{equation}}
\newcommand{\D}{\displaystyle}
\newcommand{\Bx}{\mathbf{x}}
\newcommand{\Ox}{\Omega_\Bx}
\newcommand{\Dx}{\Delta_{\Bx}}
\newcommand{\p}{\partial}
\newcommand{\al}{\alpha}
\newcommand{\myref}[1]{(\ref{#1})}
\newcommand{\mathd}{\mathrm{d}}
\newtheorem{theorem}{\textbf{Theorem}}[section]
\newtheorem{lemma}{\textbf{Lemma}}[section]
\newtheorem{remark}{\textbf{Remark}}[section]
\newtheorem{proposition}{\textbf{Proposition}}[section]
\newtheorem{definition}{\textbf{Definition}}[section]
\newenvironment{proof}{\noindent\textbf{Proof\ }}{\hspace*{\fill}$\Box$\medskip}
\begin{document}
\title{On Singularity Formation of a 3D Model for Incompressible Navier-Stokes Equations}
\author{Thomas Y. Hou\thanks{Applied and Comput. Math, Caltech,
Pasadena, CA 91125. {\it Email: hou@acm.caltech.edu.}} \and
Zuoqiang Shi\thanks{Applied and Comput. Math, Caltech,
Pasadena, CA 91125. {\it Email: shi@acm.caltech.edu.}} \and
Shu Wang\thanks{College of Applied Sciences, Beijing University of
Technology, Beijing 100124, China.
{\it Email: wangshu@bjut.edu.cn}}
}
\date{\today}
\maketitle

\begin{abstract}

We investigate the singularity formation of a 3D model that
was recently proposed by Hou and Lei in \cite{HouLei09a} for
axisymmetric 3D incompressible Navier-Stokes equations with swirl.
The main difference between the 3D model of Hou and Lei and the
reformulated 3D Navier-Stokes equations is that the convection
term is neglected in the 3D model. This model shares many
properties of the 3D incompressible Navier-Stokes equations.
One of the main results of this paper is that we prove rigorously
the finite time singularity formation of the 3D inviscid model
for a class of initial boundary value problems with smooth initial
data of finite energy. We also prove the global regularity
of the 3D inviscid model for a class of small smooth initial data.

\end{abstract}

\textbf{Key words}: Finite time singularities,
nonlinear nonlocal system, incompressible Navier-Stokes equations.

\section{Introduction}

The question of whether a solution of the 3D incompressible
Navier-Stokes equations can develop a finite time singularity from
smooth initial data with finite energy is one of the most outstanding
mathematical open problems \cite{Fefferman,MB02,Temam01}.
Most regularity analysis for the 3D Navier-Stokes equations relies
on energy estimates. Due to the incompressibility condition, the
convection term does not contribute to the energy norm of the
velocity field or any $L^p$ ($1 < p \leq \infty$) norm of the
vorticity field. As a result, the main effort has been to use the
diffusion term to control the nonlinear vortex stretching term
without making use of the convection term explicitly.

In a recent paper by Hou and Lei \cite{HouLei09a}, the authors
investigated the effect of convection by constructing
a new 3D model for axisymmetric 3D incompressible Navier-Stokes
equations with swirl. Specifically, their 3D model is given below:
\begin{eqnarray}
\label{model-u1}
&&\partial_tu_1  =
  \nu\big(\partial_r^2 + \frac{3}{r}\partial_r
 + \partial_z^2\big)u_1 + 2\partial_z\psi_1u_1,\\
\label{model-w1}
&&\partial_t\omega_1 = \nu\big(\partial_r^2 + \frac{3}{r}\partial_r
  + \partial_z^2\big)\omega_1 + \partial_z\big((u_1)^2\big),\\
\label{model-psi1}
&&- \big(\partial_r^2 + \frac{3}{r}\partial_r
  + \partial_z^2\big)\psi_1 = \omega_1.
\end{eqnarray}
Note that \myref{model-u1}-\myref{model-psi1} is already a closed system.
The only difference between this 3D model and the reformulated
Navier-Stokes equations is that the convection term is neglected
in the model. If one adds the convection term back to the left hand side
of \myref{model-u1} and \myref{model-w1}, one would recover the full
Navier-Stokes equations. This model preserves almost all the
properties of the full 3D Navier-Stokes equations, including the energy
identity for smooth solutions of the 3D model and the divergence free
property of the reconstructed 3D velocity field given by $u^\theta = ru_1$,
$ u^r = - \partial_z(r\psi_1)$,
$u^z = \frac{1}{r}\partial_r(r^2\psi_1)$.
Moreover, they proved the corresponding
non-blowup criterion of Beale-Kato-Majda type \cite{BKM84}
as well as a non-blowup criterion of Prodi-Serrin type
\cite{Prodi,Serrin} for the model. In a subsequent paper,
they proved a new partial regularity result for the model
\cite{HouLei09b} which is an analogue of the
Caffarelli-Kohn-Nirenberg theory \cite{CKN82} for the full
Navier-Stokes equations.

Despite the striking similarity at the theoretical level between
the 3D model and the Navier-Stokes equations, the former seems
to have a very different behavior from the full Navier-Stokes equations.
In \cite{HouLei09a}, the authors presented numerical evidence
which supports that the 3D model may develop a potential finite time
singularity. They further studied the mechanism that leads to
these singular events in the 3D model. On the other hand,
the Navier-Stokes equations with the same initial data seems
to have a completely different behavior.

One of the main results of this paper is that we prove rigorously
the finite time singularity formation of this 3D model for
a class of initial boundary value problems with smooth
initial data of finite energy. In our analysis, we focus on
the inviscid version of the 3D model and consider the
initial boundary value problem of the generalized 3D model
which has the following form \cite{HouLei09a} (we drop the
subscript 1 and substitute \myref{model-psi1} into \myref{model-w1}):
\begin{eqnarray}
\label{model-u-mixed}
u_t &=&2u\psi_z,\\
\label{model-psi-mixed}
-\Delta \psi_t&=&\left(u^2\right)_z,
\end{eqnarray}
where $\Delta$ is a $n$-dimensional Laplace operator
with $(\Bx,z) \equiv (x_1, x_2, ...,x_{n-1},z)$. Our results
in this paper apply to any dimension greater than or equal
to two ($n \geq 2$). To simplify
our presentation, we only present our analysis for $n=3$.
We consider the generalized 3D model in both a bounded
domain and in a semi-infinite domain with a mixed Dirichlet
Robin boundary condition.
The main result of this paper is the following theorem.
\begin{theorem}
\label{theorem bounded mix-1}
Let $\Ox = (0,a)\times (0,a)$, $\Omega = \Ox \times (0,b)$ and
$\Gamma =\{ (\Bx,z) \; | \; \Bx \in \Ox, \; z=0\}$.
Assume that the initial condition $u_0 > 0$ for $(\Bx,z) \in \Omega$,
$u_0|_{\p \Omega}=0$, $u_0\in H^2(\Omega)$,
$\psi_{0}\in H^3(\Omega)$ and satisfies \myref{psi-bc}.
Moreover, we assume that
$\psi$ satisfies the following mixed
Dirichlet Robin boundary condition:
\begin{equation}
\label{psi-bc}
\psi|_{\partial \Omega \backslash \Gamma} = 0, \quad (\psi_z + \beta \psi )|_\Gamma = 0,
\end{equation}
with
$\beta>\frac{\sqrt{2}\pi}{a}
\left(\frac{1+e^{-2\pi b/a}}{1-e^{-2\pi b/a}}\right)$.
Define $\phi(x_1,x_2,z)=\left (\frac{e^{-\alpha (z-b)}+e^{\alpha (z-b)}}{2}\right )
\sin \left (\frac{\pi x_1}{a}\right ) \sin \left (\frac{\pi x_2}{a}\right )$
where $\al$ satisfies $0<\al<\sqrt{2}\pi/a$ and
$ 2\left(\frac{\pi}{a}\right)^2
\frac{e^{\al b}-e^{-\al b}}{\al(e^{\al b}+e^{-\al b})} = \beta$.
If $u_0$ and $\psi_0$ satisfy the following condition:
\begin{eqnarray}
\label{eqn-IC}
\int_\Omega (\log u_0) \phi d \Bx dz  > 0,\quad
\int_\Omega \psi_{0z}\phi d\Bx dz   > 0,
\end{eqnarray}
then the solution of the 3D inviscid model
\myref{model-u-mixed}-\myref{model-psi-mixed} will develop a
finite time singularity in the $H^2$ norm.
\end{theorem}

The analysis of the finite time singularity for the 3D model is
rather subtle. The main technical difficulty is that this is a
multi-dimensional nonlinear nonlocal system. Currently, there is
no systematic method of analysis to study singularity formation
of a nonlinear nonlocal system. The key issue is
under what condition the solution $u$ has a strong alignment
with the solution $\psi_z$ dynamically. If $u$ and $\psi_z$ have a strong
alignment for long enough time, then the right hand side of
the $u$-equation would develop a quadratic nonlinearity dynamically,
which would lead to a finite time blowup. Note that $\psi$ is coupled
to $u$ in a nonlinear and nonlocal fashion. It is not clear whether
$u$ and $\psi_z$ will develop such a nonlinear alignment dynamically.
As a matter of fact, not all initial boundary conditions of
the 3D model would lead to finite time blowup. One of the interesting
results we obtain in this paper is that we prove the global
regularity of the 3D inviscid model for a class of small
initial data with an appropriate boundary condition. We would
like to point out that since there is no viscosity in the 3D
inviscid model, such global regularity result is still interesting
even though some smallness condition is imposed on the initial data.
We note that there is currently no corresponding global regularity result
for the incompressible 3D Euler equation even with small initial data.

One of the main contributions of this paper is that we introduce
an effective method of analysis to study singularity formation
of this nonlinear nonlocal multi-dimensional system. There are
several important steps in our analysis. The first one is that
we reformulate the $u$-equation so that the right hand side of
the reformulated $u$-equation becomes linear. This is accomplished
by dividing both sides of \myref{model-u-mixed} by $u$ and
introducing $\log(u)$ as a new variable. This is possible
since $u_0 > 0$ in $\Omega$ implies that $u > 0$ in $\Omega$
as long as the solution remains smooth.
The reformulated system now has the form:
\begin{eqnarray}
\label{model-logu}
\left (\log(u) \right )_t &=&2\psi_z, \quad (\Bx,z) \in \Omega ,\\
\label{model-Dpsi}
-\Delta \psi_t&=&\left(u^2\right)_z.
\end{eqnarray}
This idea is similar in spirit to the renormalized Boltzmann
equation introduced by DiPerna and Lions in their study of
the global renormalized weak solution of the Boltzmann equations
\cite{DL89}. The second step
is to work with the weak formulation of the reformulated model
\myref{model-logu}-\myref{model-Dpsi} by introducing an
appropriately chosen weight function $\phi$ as our test function.
How to choose this weight function $\phi$ is crucial in obtaining
the nonlinear estimate that is required to prove finite
time blowup of the nonlocal system. Guided by our analysis,
we look for a smooth and positive eigen-function in $\Omega$
that satisfies the following two conditions simultaneously:
\begin{eqnarray}
\label{eqn-eigenfunction}
-\Delta \phi = \lambda_1 \phi , \quad
\partial_z^2 \phi = \lambda_2 \phi ,
\quad \mbox{for} \; \mbox{some} \;
\lambda_1, \lambda_2 > 0, \quad (\Bx,z) \in \Omega .
\end{eqnarray}
The function $\phi$ defined in Theorem 1.1 satisfies
both of these conditions. We remark that such eigen-function
exists only for space dimension greater than or equal to two.
In the third step, we
multiply $\phi$ to \myref{model-logu} and $\phi_z$
to \myref{model-Dpsi}, integrate over $\Omega$,
and perform integration by parts. We obtain by
using \myref{eqn-eigenfunction} that
\begin{eqnarray}
\label{model-logu-int}
\frac{d}{dt} \int_\Omega (\log u) \phi \mathd {\Bx}\mathd z  &=&
2 \int_\Omega \psi_z \phi \mathd {\Bx}\mathd z  ,\\
\label{model-Dpsi-int}
\lambda_1 \frac{d}{dt} \int_\Omega \psi_z \phi
\mathd {\Bx}\mathd z &=&
\lambda_2 \int_\Omega u^2 \phi \mathd {\Bx}\mathd z .
\end{eqnarray}
All the boundary terms resulting from integration
by parts vanish by using the boundary condition
of $\psi$, the fact that $u|_{z=0} = u|_{z=b}=0$,
 the property of our eigen-function $\phi$, and
the specific choice of $\alpha$ defined in Theorem 1.1.
Substituting \myref{model-Dpsi-int} into \myref{model-logu-int}
gives the crucial estimate for our blowup analysis:
\begin{eqnarray}
\label{model-logu-int-tt}
\frac{d^2}{dt^2} \int_\Omega (\log u) \phi \mathd {\Bx}\mathd z =
\frac{2 \lambda_2}{\lambda_1} \int_\Omega u^2 \phi
\mathd {\Bx}\mathd z .
\end{eqnarray}
Further, we note that
\begin{eqnarray}
\label{eqn-logu1}
&&\int_\Omega \log(u) \phi \mathd {\Bx}\mathd z \leq
\int_\Omega (\log(u))^+ \phi \mathd {\Bx}\mathd z
\leq \int_\Omega u \phi \mathd {\Bx}\mathd z \nonumber \\
\label{eqn-logu2}
&& \leq \left (\int_\Omega \phi \mathd {\Bx}\mathd z \right )^{1/2}
\left (\int_\Omega \phi u^2 \mathd {\Bx}\mathd z \right )^{1/2} \equiv
\frac{2a}{\pi\sqrt{\alpha }}
\left (\int_\Omega \phi u^2 \mathd {\Bx}\mathd z \right )^{1/2}.
\end{eqnarray}
Integrating \myref{model-logu-int-tt} twice in time
and using \myref{eqn-logu1}-\myref{eqn-logu2},
we establish a sharp nonlinear
dynamic estimate for $(\int_\Omega \phi u^2 \mathd {\Bx}\mathd z )^{1/2}$,
which enables us to prove finite time blowup of the 3D model.

Another interesting result is that we prove the finite
time blowup of the 3D model with partial viscosity. Under similar
assumptions on $u_0$, $\psi_0$ and $\omega_0$ as in the
inviscid case and
by assuming that $\omega$ satisfies a boundary condition
similar to $\psi$, we can prove that the 3D model with
partial viscosity
\begin{eqnarray}
\label{model-u-vis}
u_t &=&2u\psi_z,\\
\label{model-w-vis}
\omega_t&=&\left(u^2\right)_z + \nu \Delta \omega , \\
\label{model-psi-vis}
-\Delta \psi&=& \omega ,
\end{eqnarray}
develops a finite time singularity.

We also study singularity formation of the 3D model with
$\beta = 0$ in \myref{psi-bc}. This case is interesting
because the smooth
solution of the corresponding 3D model satisfies an energy
identity. In this case, we can establish a finite time
blowup under an additional condition :
\[
\int_0^a \int_0^a (\psi - \psi_0)|_\Gamma
\sin \left (\frac{\pi x_1}{a}\right )
\sin \left (\frac{\pi x_2}{a}\right )
d \Bx < c_0 \int_\Omega \psi_{0z} \phi \mathd {\Bx}\mathd z ,
\]
as long as the solution remains regular, where
$c_0>0$ depends only on the size of the domain.

We remark that although the 3D model using the mixed
Dirichlet Robin boundary condition with $\beta \neq 0$
does not conserve energy exactly, we prove that the energy
remains bounded as long as the solution is smooth and
$\beta < c_0$ for some $c_0 >0$. We also establish the
local well-posedness of the initial boundary problem
with the mixed Dirichlet Robin boundary condition.
Our numerical study shows that the energy is still bounded
up to the blowup time even if $\beta > c_0$. Our
numerical study also suggests that the nature of the
singularity in the case of $ \beta > c_0$ is
qualitatively similar to that in the case of $\beta < c_0$.

Study of singularity formation for various model equations for
the 3D Euler/Navier-Stokes equations or the surface quasi-geostrophic
equation has been investigated by a number of people, including
Constantin-Lax-Majda \cite{CLM85},
Constantin \cite{Constantin86}, DeGregorio
\cite{DeGregorio90,DeGregorio96}, Kerr \cite{Kerr93}, 
Caflisch-Siegel \cite{CS04}, Cordoba-Cordoba-Fontelos
\cite{CCF05}, Chae-Cordoba-Cordoba-Fontelos \cite{CCCF05},
Matsumotoa-Becb-Frisch \cite{MBF07}, Hou-Li \cite{HouLi06},
Li-Sinai \cite{LS08},
Li-Rodrigo \cite{LR09}, and
Hou-Li-Shi-Wang-Yu \cite{HLSWY09}. The
effect of convection has also been studied by Hou and Li in a
recent paper \cite{HouLi08} via a new 1D model.
They proved dynamic stability of this 1D model by exploiting
the nonlinear cancellation between the convection and the vortex
stretching term, and constructing
a Lyapunov function which gives rise to a global pointwise
estimate for the derivatives of the vorticity in their model.

We would like to point out that the study of \cite{HouLi08,HouLei09a}
is based on a reduced model for some special flow geometry.
One should not conclude that convection term could lead to
depletion of singularity of the Navier-Stokes equations in general.
It is possible that convection term may act as a destabilizing term
for a different flow geometry. One of the main findings of
\cite{HouLi08,HouLei09a} and the present paper is that convection
term carries important physical information that should not be
neglected in our analysis of the Navier-Stokes equations.
Since the behavior of the 3D model is very different from that
of the Navier-Stokes equations, it is important to develop
a method of analysis that could take into account the physical
significance of convection term in an essential way.

The rest of the paper is organized as follows. In Section 2, we
study the local well-posedness of the 3D inviscid model and some
properties of the model. In section 3, we prove the finite time
blowup of the 3D inviscid model with mixed Dirichlet and Robin
boundary conditions. In Section 4, we prove finite time blowup of
the 3D model with partial viscosity. Section 5 is devoted to
analyzing the finite time blowup of the 3D inviscid  model with
some conservative boundary conditions. In Section 6, we prove the
global regularity of the 3D inviscid model for a class of small
initial initial data with some appropriare boundary condition. A
technical lemma is proved in Appendix A.

\section{Properties of the 3D model}

\subsection{Local well-posedness in $H^s$}

In this section, we will establish the local well-posedness of the
initial boundary problem of the 3D model with the mixed
Dirichlet Robin boundary condition. We will present our
analysis for the semi-infinite domain using the Sobolev space
$H^s$. The same result is also true in a bounded domain.

Consider the 3D model with the following mixed initial
boundary condition:
\begin{eqnarray}
\label{model psi mixed}
&&\left\{\begin{array}{rcl}
u_t &=&2u\psi_z\\
-\Delta \psi_t&=&\left(u^2\right)_z
\end{array}\right., \quad (\Bx,z)\in \Omega=\Omega_\Bx\times(0,\infty)\\
\label{model psi mixed bc}
&&\psi|_{\p\Omega\backslash \Gamma}=0,
\quad \left(\psi_z+\beta \psi\right)|_{\Gamma}=0, \\
&& \psi|_{t=0}=\psi_0(\Bx,z),\quad u|_{t=0}=u_0(\Bx,z)\ge0
,\label{model psi initial}
\end{eqnarray}
where $\Bx=(x_1,x_2),\;\Ox=(0,a)\times(0,a),\;
\Gamma=\left\{(\Bx,z) \;|\; \Bx \in \Ox,\; z=0\right\}$
and $\Delta=\Delta_\Bx+\frac{\p^2}{\p z^2}=\frac{\p^2}{\p x_1^2}+\frac{\p^2}{\p x_2^2}+\frac{\p^2}{\p z^2}$.

The local well-posedness analysis depends on an important
property of the elliptic operator with the mixed
Dirichlet Robin boundary condition.

\begin{lemma}
\label{reisz}
There exists a unique solution $v\in H^{s}(\Omega)$
to the boundary value problem:
\begin{eqnarray}
\label{eqn-Laplace}
&&-\Delta v = f,\quad (\Bx,z)\in \Omega, \\
\label{mix bc}
&&v|_{\partial \Omega\backslash \Gamma} =0,
\quad (v_z+\beta v)|_\Gamma =0,
\end{eqnarray}
if $\beta \in S_\infty \equiv \{\beta \;|\;
\beta\ne \frac{\pi|k|}{a} \;\;\mbox{for all} \; k\in \mathbb{Z}^2\}$,
$f\in H^{s-2}(\Omega)$ with $s \ge 2$ and
$f|_{\p\Omega\backslash\Gamma}=0$. Moreover we have
\begin{eqnarray}
\|v\|_{H^s(\Omega)}\le C_s \|f\|_{H^{s-2}(\Omega)},
\end{eqnarray}
where $C_s$ is a constant depending on $s$, $|k|=\sqrt{k_1^2+k_2^2}$.
\end{lemma}
We defer the proof of Lemma \ref{reisz} to Appendix A.

\begin{remark}
We remark that we can prove the same result as in Lemma \ref{reisz}
for a bounded domain $\Omega = \Ox\times (0,b)$ with the same
boundary condition by assuming
that $\beta\in S_b$ where
\begin{equation}
\label{set-S}
S_b =\{\beta \;| \; \beta \ne \frac{\pi|k|}{a}, \;
\mbox{and}\; \beta \ne \frac{\pi|k|}{a}\left ( \frac{ 1+e^{-2|k|\pi b/a}}
{ 1-e^{-2|k|\pi b/a}}\right ) \mbox{for all} \; k\in \mathbb{Z}^2\}.
\end{equation}
\end{remark}

\begin{remark}
We would like to point out that regularity estimates for the
second order elliptic problem with mixed Dirichlet and
Robin boundary conditions
have been studied by Temam and Ziane \cite{RZ04} in
the context of geophysical flows. However,
there is an important difference between the case investigated
by Temam and Ziane and the case considered by us here. Although
the problem is formulated slightly differently, the case
considered by Temam and Ziane corresponds to the case of
$\beta < 0$ on $\Gamma$, which gives rise to a dissipative boundary
condition. The case of $\beta >0$ is the main focus of
our present study. This case is more difficult because
the boundary contribution from the Robin boundary
condition produces the wrong sign when we perform energy
estimates. In our analysis, we need to study the spectral
proprety of the differential operator and exclude an infinite
number of discrete eigenvalues from $\beta$ in order to
obtain well-posedness of the elliptic problem with this
mixed Dirichlet Robin boundary condition.
\end{remark}

\begin{definition}
\label{def-k}
Let $\mathcal{K}: H^{s-2}(\Omega)\rightarrow H^{s}(\Omega)$ be a linear operator defined as following:
\begin{eqnarray*}
  \mbox{for all}\quad f\in H^{s-2}(\Omega),\quad \mathcal{K}(f) \;\mbox{is the solution of the boundary value problem
\myref{eqn-Laplace}-\myref{mix bc}}.
\end{eqnarray*}
\end{definition}
It follows from Lemma \ref{reisz} that for any $f\in H^{s-2}(\Omega)$,
we have
\begin{eqnarray}
\label{bound k}
  \|\mathcal{K}(f)\|_{H^s(\Omega)}\le  C_s\|f\|_{H^{s-2}(\Omega)} .
\end{eqnarray}

We also need the following well-known Sobolev inequality \cite{Foland95}.
\begin{lemma}
\label{lemma hs}
Let $u,v \in H^s(\Omega)$ with
$s>3/2$. We have
\begin{eqnarray}
\|uv\|_{H^s(\Omega)}\le c \|u\|_{H^s(\Omega)}\|v\|_{H^s(\Omega)}.
\end{eqnarray}
\end{lemma}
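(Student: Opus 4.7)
The plan is to reduce the product bound on $\Omega = \Ox\times(0,\infty)$ to the corresponding bound on $\mathbb{R}^3$ via a Sobolev extension, and to establish the $\mathbb{R}^3$ version by a standard Fourier-side estimate. Because $\Omega$ is a product of intervals, it is a Lipschitz domain and admits a bounded extension $E:H^s(\Omega)\to H^s(\mathbb{R}^3)$ with $\|Ef\|_{H^s(\mathbb{R}^3)}\le C_s\|f\|_{H^s(\Omega)}$. Writing $\tilde u=Eu$, $\tilde v=Ev$, we have $(\tilde u\tilde v)|_\Omega = uv$, so it suffices to prove the algebra property on all of $\mathbb{R}^3$.

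On $\mathbb{R}^3$ I would argue as follows. With $\langle\xi\rangle:=(1+|\xi|^2)^{1/2}$, the elementary inequality
\begin{equation*}
\langle\xi\rangle^s \le 2^s\bigl(\langle\xi-\eta\rangle^s+\langle\eta\rangle^s\bigr), \qquad s\ge 0,
\end{equation*}
(which follows from $\langle\xi\rangle\le\langle\xi-\eta\rangle+\langle\eta\rangle$), combined with $\widehat{uv}=\hat u*\hat v$, gives the pointwise estimate
\begin{equation*}
\langle\xi\rangle^s|\widehat{uv}(\xi)|\le C\int\bigl(\langle\xi-\eta\rangle^s|\hat u(\xi-\eta)|\,|\hat v(\eta)|+|\hat u(\xi-\eta)|\,\langle\eta\rangle^s|\hat v(\eta)|\bigr)\,\mathd\eta.
\end{equation*}
Taking $L^2_\xi$ norms and applying Young's inequality ($L^2*L^1\subset L^2$) yields
\begin{equation*}
\|uv\|_{H^s(\mathbb{R}^3)}\le C\bigl(\|u\|_{H^s}\|\hat v\|_{L^1}+\|\hat u\|_{L^1}\|v\|_{H^s}\bigr).
\end{equation*}

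The hypothesis $s>3/2$ enters only in closing the loop via the embedding $H^s(\mathbb{R}^3)\subset \mathcal{F}L^1$: by Cauchy-Schwarz,
\begin{equation*}
\|\hat u\|_{L^1}\le\Bigl(\int_{\mathbb{R}^3}\langle\xi\rangle^{-2s}\,\mathd\xi\Bigr)^{1/2}\|u\|_{H^s(\mathbb{R}^3)},
\end{equation*}
and the first factor is finite precisely when $2s>3$. Substituting this into the previous display yields $\|uv\|_{H^s(\mathbb{R}^3)}\le C\|u\|_{H^s}\|v\|_{H^s}$, and the extension step transfers the bound back to $\Omega$.

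The only non-routine step is the construction of a bounded extension on the particular $\Omega=\Ox\times(0,\infty)$, which has edges and corners and is unbounded in $z$. Stein's extension theorem handles arbitrary Lipschitz domains, so this is a black-box appeal; a hands-on construction would use iterated higher-order (Seeley or Hestenes) reflection in each variable together with a smooth partition of unity near the corners, because a plain mirror reflection only preserves $H^s$ up to $s\le 1$ and must be replaced by a finite linear combination of dilated reflections to accommodate the higher derivatives that appear when $s\ge 2$ (the regime used throughout the paper).
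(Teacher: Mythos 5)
The paper does not prove this lemma at all: it is stated as a ``well-known Sobolev inequality'' with a citation to Folland, so there is no internal proof to compare against. Your argument is a correct and complete derivation of that standard fact --- the reduction to $\mathbb{R}^3$ by a Stein-type extension, the pointwise inequality $\langle\xi\rangle^s\le 2^s(\langle\xi-\eta\rangle^s+\langle\eta\rangle^s)$ applied to $\widehat{uv}=\hat u*\hat v$, Young's inequality, and the embedding $H^s(\mathbb{R}^3)\subset\mathcal{F}L^1$ for $s>3/2$ --- and it is essentially the textbook proof the paper implicitly invokes; your closing remark about needing a higher-order (Hestenes/Seeley) reflection rather than a plain mirror reflection on the semi-infinite rectangular column is a fair and correctly identified technical point.
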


Now we can state the local well-posedness result for the 3D model
with the mixed Dirichlet Robin boundary condition.
\begin{theorem}
\label{local wellposedness} Assume that $u_0 \in H^s(\Omega)$,
$\psi_0 \in H^{s+1}(\Omega)$ for some $s>3/2$, $u_0|_{\partial
\Omega}= 0 $ and $\psi_0$ satisfies \myref{model psi mixed bc}.
Moreover, we assume that $\beta \in S_\infty$ (or $S_b$) as
defined in Lemma \ref{reisz}.
Then there exists a finite time $T=T\left(\|u_0\|_{H^s(\Omega)},
\|\psi_{0}\|_{H^{s+1}(\Omega)}\right)>0$ such that the system
\myref{model psi mixed}-\myref{model psi initial}
has a unique solution, $u\in
C^1([0,T),H^s(\Omega))$ and $\psi\in C^1([0,T),H^{s+1}(\Omega))$.
\end{theorem}
\begin{proof}
Let $v=u^2$, then we obtain an equivalent system for
$v$ and $\psi$ as follows:
\begin{eqnarray}
\label{system-reform1}
v_t &=&4v\psi_z,\\
\label{system-reform2}
\psi_t&=&\mathcal{K}(v_z),
\end{eqnarray}
where $\mathcal{K}$ is defined in Definition \ref{def-k}.
To prove the local well-posedness of system
(\ref{system-reform1})-(\ref{system-reform2}),
we introduce the space
\[
V^{s+1}=\{\psi\in H^{s+1}(\Omega):\psi|_{\partial \Omega\backslash\Gamma} =0,
(\psi_z+\beta \psi)|_\Gamma=0\}.
\]
By the Trace Theorem \cite{Evans98},
the trace of $\psi$ and $\psi_z$ on $\partial \Omega$
is well defined since we assume that
$\psi\in H^{s+1}(\Omega)$ with $s > 3/2$.
Then we can write the system (\ref{system-reform1})-(\ref{system-reform2})
as an ODE in the Banach space $X:=H^s(\Omega) \times V^{s+1}(\Omega)$:
\begin{equation}
\label{new-system}
U_t = F(U),
\end{equation}
where $U = (U_1, U_2)= (v,\psi)$, $F(U) = (F_1(U),
F_2(U))=\left(4v\psi_z, \mathcal{K}(v_z)\right)$ and the norm
$\|\cdot\|_X$ of the space $X$ is defined as follows:
\begin{eqnarray*}
  \|U\|_X=\|U_1\|_{H^s(\Omega)}+\|U_2\|_{H^{s+1}(\Omega)}.
\end{eqnarray*}

We will use the well-known Picard theorem on a Banach space
(see e.g. Theorem 3.1 in \cite{MB02}) to prove the local well-posedness
of system (\ref{new-system}). In order to apply the Pichard theorem
on a Banach space, we need to check the following two conditions:
\begin{itemize}
\item[1.] $F$ maps $O\subset X$ to $X$, where $O$ is an open subset of $X$.
\item[2.] $F$ is locally Lipschitz continuous, i.e. for any $U\in O$, there exists $L>0$ and an open
neighborhood of $U$, $B_U\subset O$, such that
\begin{eqnarray*}
  \|F(\bar{U})-F(\tilde{U})\|_X\le L  \|\bar{U}-\tilde{U}\|_X,\quad \mbox{for all}\quad \bar{U}, \tilde{U} \in B_U.
\end{eqnarray*}
\end{itemize}
First, we choose the open set $O$ to be a bounded set defined as following:
\begin{eqnarray}
  \label{open-set}
  O=\left\{U\in X: \|U\|_X< M\right\},
\end{eqnarray}
where $M>0$ is a constant.

To verify the first condition, we obtain by
using estimate \myref{bound k} and Lemma \ref{lemma hs} that
\begin{eqnarray}
  \label{bounded-picard}
  \|F(U)\|_X&=&\|F_1(U)\|_{H^s}+\|F_2(U)\|_{H^{s+1}}\nonumber\\
&=& \|4U_1U_{2z}\|_{H^s}+\| \mathcal{K}(U_{1z})\|_{H^{s+1}}\nonumber\\
&\le& 4C_s \|U_1\|_{H^s} \|U_{2z}\|_{H^s}+ C_s\|U_{1z}\|_{H^{s-1}}\nonumber\\
&\le& 4C_s \|U_1\|_{H^s} \|U_2\|_{H^{s+1}}+ C_s\|U_{1}\|_{H^{s}}\nonumber\\
&\le& 4C_s\|U\|_X\left(1+\|U\|_X\right) < 4C_s M(1+M),
\end{eqnarray}
where $U_{iz} \equiv \left (U_i \right)_z $ ($i=1,2$).

Next, we show that $F$ is locally Lipschitz continuous.
For any $\bar{U}, \tilde{U}\in O$, we have by using
\myref{bound k} and Lemma \ref{lemma hs} that
\begin{eqnarray}
  \label{Lip-condition}
  \|F(\bar{U})-F(\tilde{U})\|_X&=&\|F_1(\bar{U})-F_1(\tilde{U})\|_{H^s}+ \|F_2(\bar{U})-F_2(\tilde{U})\|_{H^{s+1}}\nonumber\\
&= &  4\left\|\bar{U}_1\bar{U}_{2z}-\tilde{U}_1\tilde{U}_{2z}\right\|_{H^s}+\left\|\mathcal{K}\left(\left(\bar{U}_1-\tilde{U}_1\right)_z\right)
\right\|_{H^{s+1}}\nonumber\\
&\le& 4C_s\left\|\bar{U}_1\right\|_{H^s}\left\|\left(\bar{U}_{2}-\tilde{U}_{2}\right)_z\right\|_{H^s}+
4C_s\left\|\tilde{U}_{2z}\right\|_{H^s}\left\|\bar{U}_1-\tilde{U}_1\right\|_{H^s}+C_s\left\|\left(\bar{U}_1-\tilde{U}_1\right)_z\right\|_{H^{s-1}}\nonumber\\
&\le& 4C_s\left\|\bar{U}_1\right\|_{H^s}\left\|\bar{U}_{2}-\tilde{U}_{2}\right\|_{H^{s+1}}+
4C_s\left\|\tilde{U}_{2}\right\|_{H^{s+1}}\left\|\bar{U}_1-\tilde{U}_1\right\|_{H^s}+C_s\left\|\bar{U}_1-\tilde{U}_1\right\|_{H^{s}}\nonumber\\
&\le &\left(4C_sM+C_s\right)\left(\left\|\bar{U}_1-\tilde{U}_1\right\|_{H^s}+\left\|\bar{U}_2-\tilde{U}_2\right\|_{H^{s+1}}\right)\nonumber\\
&=&  C_s(4M+1) \left\|\bar{U}-\tilde{U}\right\|_X \;,
\end{eqnarray}
which proves that $F$ is locally Lipschitz continuous.

Now we can apply the Picard theorem on a Banach space to conclude that
there exists a time $T\left(\|u_0\|_{H^s(\Omega)},\|\psi_{0}\|_{H^{s+1}(\Omega)}\right)>0$ such that the system
\begin{eqnarray*}
U_t = F(U), \quad\quad U|_{t=0}=U_0\in O,
\end{eqnarray*}
has a unique solution $U=(v, \psi)\in
C^1\left([0,T),H^s(\Omega)\times V^{s+1}(\Omega)\right)$.
\end{proof}

\subsection{Bounded energy for the 3D model with mixed boundary conditions}
\begin{proposition}
\label{Prop-BC}
Let $\Omega = \Ox\times (0,b)$ and
$\Gamma =\{ (\Bx,z) \; | \; \Bx \in \Ox, \;z=0\}$.
Assume
$u_0|_{z=0} = u_0|_{z=b}=0$, $u_0\in H^2(\Omega)$,
$\psi_{0}\in H^3(\Omega)$ and satisfies \myref{psi-bc-1}.
Moreover, we assume that $\psi$ satisfies the following mixed
Dirichlet Robin boundary condition:
\begin{equation}
\label{psi-bc-1}
\psi|_{\partial \Omega \backslash \Gamma} = 0, \quad (\psi_z + \beta \psi )|_\Gamma = 0.
\end{equation}
Let $T$ be the largest time up to which the 3D inviscid model
\myref{model-u-mixed}-\myref{model-psi-mixed} has a smooth
solution with $u(t)\in H^2(\Omega)$ and
$\psi(t)\in H^3(\Omega)$ for $0 \leq t < T$. Then the following
identity holds
\begin{eqnarray}
\label{energy eqa}
\frac{d}{dt}\left(\int_\Omega \left(u^2+2|\nabla \psi|^2\right)
d\Bx dz -2\beta\int_0^a \int_0^a\psi^2|_{z=0}d\Bx \right)=0, \quad
0 \leq t < T.
\end{eqnarray}
Moreover, we have for $0 \leq t < T$ that
\begin{eqnarray}
\label{energy estimate}
\int_\Omega \left(u^2+2(1-\beta b)|\nabla \psi|^2\right) d \Bx dz \le
\int_\Omega \left(u_0^2+2|\nabla \psi_0|^2\right) d\Bx dz -
2\beta\int_0^a \int_0^a\psi_0^2|_{z=0} d \Bx .
\end{eqnarray}
\end{proposition}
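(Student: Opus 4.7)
The plan is a standard energy-method calculation, with the twist that the Robin condition on $\Gamma$ turns the boundary term into a time derivative rather than killing it, and a one-dimensional Poincar\'e-type bound then absorbs this boundary contribution into the interior Dirichlet energy.

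First I would derive the identity \myref{energy eqa}. Multiply \myref{model-u-mixed} by $2u$ and integrate over $\Omega$ to obtain
\begin{equation*}
\frac{d}{dt}\int_\Omega u^2\,\dx dz = 4\int_\Omega u^2 \psi_z\,\dx dz.
\end{equation*}
Next multiply \myref{model-psi-mixed} by $2\psi$ and integrate by parts. On the right-hand side, $2\int_\Omega \psi(u^2)_z\,\dx dz = -2\int_\Omega \psi_z u^2\,\dx dz$, since $u$ vanishes at $z=0$ and $z=b$. On the left, $-2\int_\Omega \psi\,\Delta\psi_t\,\dx dz = \frac{d}{dt}\int_\Omega |\nabla\psi|^2\,\dx dz - 2\int_{\p\Omega}\psi\,\p_n\psi_t\,dS$. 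The boundary integral is supported only on $\Gamma$ (because $\psi=0$ elsewhere); on $\Gamma$ the outward normal is $-\hat z$, and differentiating the Robin condition \myref{psi-bc-1} in time yields $\psi_{tz}=-\beta\psi_t$, so the boundary integral equals $-\beta\,\frac{d}{dt}\int_{\Ox}\psi^2|_{z=0}\,\dx$. Adding the two relations eliminates the $\int u^2\psi_z$ term and gives exactly \myref{energy eqa}.

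For the estimate \myref{energy estimate}, I would invoke a one-dimensional trace bound. Since $\psi(\Bx,b)=0$, the fundamental theorem of calculus gives $\psi(\Bx,0)=-\int_0^b \psi_z(\Bx,z)\,dz$, and Cauchy--Schwarz in $z$ yields $\psi^2|_{z=0}\le b\int_0^b\psi_z^2\,dz$. Integrating in $\Bx$ produces
\begin{equation*}
\int_{\Ox}\psi^2|_{z=0}\,\dx \le b\int_\Omega|\nabla\psi|^2\,\dx dz.
\end{equation*}
Integrating \myref{energy eqa} in time, solving for $\int u^2+2\int|\nabla\psi|^2$, and using this trace bound to absorb the $2\beta\int\psi^2|_{z=0}$ term into $2\beta b\int|\nabla\psi|^2$ then yields \myref{energy estimate}.

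The only delicate point in the plan is the handling of the boundary contribution in the $\psi$ multiplication: one must remember that differentiating the mixed condition in $t$ is legitimate (the local well-posedness from Theorem \ref{local wellposedness} guarantees the required regularity, and $\beta$ is time-independent), and one must choose the correct sign of the outward normal on $\Gamma$ so that the Robin term becomes a perfect time derivative rather than a spurious source. Everything else reduces to bookkeeping and a one-line Poincar\'e-type estimate.
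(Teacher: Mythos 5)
Your proposal follows essentially the same route as the paper: the same $u$- and $\psi$-multiplier energy computation, the same conversion of the Robin boundary term into a perfect time derivative via $\psi_{tz}=-\beta\psi_t$ on $\Gamma$, and the same one-dimensional trace bound $\int_{\Ox}\psi^2|_{z=0}\,d\Bx\le b\int_\Omega|\nabla\psi|^2\,d\Bx dz$ using $\psi|_{z=b}=0$. The only flaw is a factor of two: multiplying \myref{model-psi-mixed} by $2\psi$ produces $-2\int_\Omega u^2\psi_z$ on the right, which does not cancel the $4\int_\Omega u^2\psi_z$ coming from the $u$-equation, so you must multiply by $4\psi$ (equivalently, double your second relation before adding), after which the coefficients $2|\nabla\psi|^2$ and $2\beta\psi^2|_{z=0}$ in \myref{energy eqa} come out correctly.
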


\begin{remark}
One immediate consequence of the above proposition is
that if $\beta <1/b$, both $\int_\Omega u^2 d\Bx dz $ and
$\int_\Omega |\nabla \psi|^2 d \Bx dz $ are bounded.
\end{remark}

\begin{proof}
First of all, we know by the local existence result in
Theorem \ref{local wellposedness}
that there exists a $T_0$ such that the 3D inviscid model
\myref{model-u-mixed}-\myref{model-psi-mixed} has a unique
smooth solution with $u(t) \in H^2(\Omega)$ and $\psi(t) \in H^3(\Omega)$
for $0 \le t < T_0$.
Let $T$ be the largest time up to which the 3D inviscid model
\myref{model-u-mixed}-\myref{model-psi-mixed} has a smooth
solution with $u(t)\in H^2(\Omega)$ and
$\psi(t)\in H^3(\Omega)$ for $0 \leq t < T$. In the following,
we will perform energy estimates for
\myref{model-u-mixed}-\myref{model-psi-mixed} for
$0 \le t < T$.

First, we multiply \myref{model-u-mixed} by $u$
and integrate over $\Omega$. We obtain
\begin{eqnarray}
\label{energy u}
\frac{\mathd}{\mathd t}\int_\Omega u^2 \mathd {\Bx}\mathd z=4\int_\Omega u^2\psi_z\mathd {\Bx}\mathd z .
\end{eqnarray}
Next, we multiply \myref{model-psi-mixed} by $\psi$ and integrate over
$\Omega$ to obtain
\begin{eqnarray}
-\int_\Omega \Delta \psi_t \psi \mathd {\Bx}\mathd z = \int_\Omega \left(u^2\right)_z\psi \mathd {\Bx}\mathd z .
\end{eqnarray}
Integrating by parts and using boundary condition \myref{model psi
mixed bc}, we have
\begin{eqnarray}
\label{energy psi}
\frac{\mathd}{\mathd t}\int_\Omega |\nabla \psi|^2 \mathd {\Bx}\mathd z +2\int_{\Ox} \psi_{zt}\psi|_{z=0}\mathd {\Bx} = -2\int_\Omega u^2\psi_z \mathd {\Bx}\mathd z .
\end{eqnarray}
Multiplying \myref{energy psi} by 2 and adding the
resulting equation to \myref{energy u} gives
\begin{eqnarray}
\frac{\mathd}{\mathd t}\int_\Omega (u^2+2|\nabla \psi|^2 )
\mathd {\Bx}\mathd z &=&-4\int_{\Ox} \psi_{zt}\psi|_{z=0}
\mathd {\Bx}\nonumber\\
&=&4\beta\int_{\Ox} \psi_{t}\psi|_{z=0}\mathd {\Bx}\nonumber\\
&=&2\beta\frac{\mathd}{\mathd t}\int_{\Ox} \psi^2|_{z=0}\mathd {\Bx} ,
\end{eqnarray}
which gives \myref{energy eqa}.
On the other hand, we have the following estimate
\begin{eqnarray}
\int_{\Ox} \psi^2|_{z=0}\mathd {\Bx}&=&\int_{\Ox}\left(\int_0^b \psi_z \mathd z\right)^2\mathd {\Bx}\nonumber\\
&\le&b\int_{\Ox}\int_0^b \psi_z^2 \mathd z\mathd {\Bx}\le b\int_\Omega |\nabla \psi|^2 \mathd {\Bx}\mathd z .
\end{eqnarray}
This implies that
\begin{eqnarray}
\label{estimate psi boundary}
\int_\Omega \left(u^2+2|\nabla \psi|^2\right) \mathd {\Bx}\mathd z-2\beta\int_{\Ox} \psi^2|_{z=0}\mathd {\Bx}
\ge \int_\Omega \left(u^2+2(1-\beta b)|\nabla \psi|^2\right) \mathd {\Bx}\mathd z .
\end{eqnarray}
Combining \myref{energy eqa} with \myref{estimate psi boundary}, we
obtain
\begin{eqnarray}
\int_\Omega \left(u^2+2(1-\beta b)|\nabla \psi|^2\right) \mathd {\Bx}\mathd z\le
\int_\Omega \left(u_0^2+2|\nabla \psi_0|^2\right) \mathd {\Bx}\mathd z-2\beta\int_{\Ox} \psi_0^2|_{z=0}\mathd {\Bx} .
\end{eqnarray}
This completes the proof of Proposition \ref{Prop-BC}.
\end{proof}

\section{Blow-up of the 3D inviscid model}

In this section, we will prove that the 3D model \myref{model psi
mixed}-\myref{model psi mixed bc} develops a finite time
singularity for a class of smooth initial data with finite energy.
The finite time blowup is proved in a semi-infinite and a bounded
domain with mixed Dirichlet-Robin boundary conditions.

\subsection{Blow-up in a semi-infinite domain}

First, we consider the initial boundary value problem \myref{model
psi mixed}-\myref{model psi initial} in a semi-infinite domain
with $\Omega = \Ox \times (0,\infty)$. The main result is stated
in the theorem below:
\begin{theorem}
\label{theorem infty mix}
Assume that $u_0\in H^2(\Omega)$,
$u_0|_{\p \Omega}=0$, $u_0|_{\Omega}>0$,
$\psi_0\in H^3(\Omega)$ and satisfies \myref{model psi mixed bc}.
Further we assume that
 $\D\beta>\frac{\sqrt{2}\,\pi}{a}$
and $\beta \in S_\infty$ as defined in Lemma \ref{reisz}.
Choose $\al=\frac{2\pi^2}{\beta a^2}$, and define
\begin{eqnarray}
\label{phi mixed infty}
\phi(\Bx,z)=e^{-\al z} \phi_1(\Bx), \;\;
\phi_1(\Bx) =\sin \frac{\pi x_1}{a}\sin \frac{\pi x_2}{a},\quad (\Bx,z)\in
\Omega,
\end{eqnarray}
\begin{eqnarray*}
A=\int_\Omega (\log u_0)\phi \mathd {\Bx}\mathd z,\quad B=2\int_\Omega \psi_{0z}\phi \mathd {\Bx}\mathd z,\quad
D=\frac{\pi\al^{5/2}}{a(2\left(\frac{\pi}{a}\right)^2-\al^2)},\quad I_\infty=\int_0^\infty
\frac{dx}{\sqrt{x^3+1}}.
\end{eqnarray*}
If $A>0$ and $B>0$, then the 3D inviscid model \myref{model psi
mixed}, with the boundary condition \myref{model psi mixed bc} and
the initial data \myref{model psi initial}
 will develop a finite
time singularity in the $H^2$-norm no later than
\begin{eqnarray*}
T^*=\left(\frac{2DB}{3}{\frac{\sqrt{\al}\, \pi}{2a}}\right)^{-1/3}I_\infty.
\end{eqnarray*}
\end{theorem}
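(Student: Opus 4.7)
I would follow the weak--formulation roadmap previewed in the introduction, specialized to the semi--infinite geometry and the mixed Dirichlet--Neumann boundary data. The weight $\phi(\Bx,z)=e^{-\alpha z}\sin(\pi x_1/a)\sin(\pi x_2/a)$ is manufactured to be a simultaneous eigenfunction: one checks by direct differentiation that $-\Delta\phi=\lambda_1\phi$ with $\lambda_1=2(\pi/a)^2-\alpha^2$ and $\partial_z^2\phi=\lambda_2\phi$ with $\lambda_2=\alpha^2$. The assumption $\beta>\sqrt 2\,\pi/a$ forces $\alpha=2\pi^2/(\beta a^2)<\sqrt 2\,\pi/a$, so both eigenvalues are strictly positive. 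Since $u_0>0$ in $\Omega$ and $u=u_0\exp\!\bigl(2\int_0^t\psi_z\,ds\bigr)$, the renormalization $(\log u)_t=2\psi_z$ is legitimate on any time interval of smooth existence, and $u|_{\partial\Omega}=0$ is propagated.

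The next step is to test the renormalized $u$--equation against $\phi$ and the $\psi$--equation $-\Delta\psi_t=(u^2)_z$ against $\phi_z$. The first test immediately yields $\frac{d}{dt}\int_\Omega(\log u)\phi\,\mathd\Bx\mathd z=2\int_\Omega\psi_z\phi\,\mathd\Bx\mathd z$ once one checks exponential decay at $z=\infty$. For the second test, Green's identity transfers the Laplacian onto $\phi_z$ using $-\Delta\phi_z=\lambda_1\phi_z$; a subsequent integration by parts in $z$ of the resulting $\lambda_1\int\psi_t\phi_z$ converts it into $-\lambda_1\int\psi_{tz}\phi$ plus a trace contribution, while on the right--hand side integration by parts in $z$ uses $u|_{z=0}=0$ and $\phi_{zz}=\lambda_2\phi$ to produce $-\lambda_2\int u^2\phi$. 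The crucial bookkeeping is that all residual boundary terms on $\Gamma$ combine into a single multiple
\[
\bigl(\alpha\beta-2(\pi/a)^2\bigr)\int_{\Ox}\psi_t\phi\,\big|_{z=0}\,\mathd\Bx,
\]
whose coefficient vanishes exactly because of the prescribed choice $\alpha=2\pi^2/(\beta a^2)$; the lateral contributions die because $\psi=\phi=0$ there, and the $z=\infty$ contributions die by exponential decay of $\phi$. Assembling the two tests gives $\lambda_1\frac{d}{dt}\int_\Omega\psi_z\phi=\lambda_2\int_\Omega u^2\phi$, and differentiating the log identity once more in time yields the closed relation
\[
\frac{d^2}{dt^2}\int_\Omega(\log u)\phi\,\mathd\Bx\mathd z=\frac{2\lambda_2}{\lambda_1}\int_\Omega u^2\phi\,\mathd\Bx\mathd z.
\]

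Setting $F(t)\equiv\int_\Omega(\log u)\phi\,\mathd\Bx\mathd z$ and using $\int_\Omega\phi=4a^2/(\pi^2\alpha)$, the Cauchy--Schwarz chain in \myref{eqn-logu} upgrades the above to $F''\ge K F^2$ with $K=\pi^2\alpha^3/[2a^2(2(\pi/a)^2-\alpha^2)]=D\pi\sqrt\alpha/(2a)$. The initial data give $F(0)=A>0$ and $F'(0)=B>0$. Introducing the shift $\eta=F-A\ge 0$, which satisfies $\eta(0)=0$, $\eta'(0)=B$, and $\eta''\ge K\eta^2$ (since $F\ge\eta$), I would multiply by $\eta'$ and integrate in time to obtain the Riccati--type lower bound $(\eta')^2\ge\frac{2K}{3}\eta^3+B^2$. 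The rescaling $\eta=\gamma x$ with $\gamma^3=3B^2/(2K)$ reduces this to $\gamma x'\ge B\sqrt{x^3+1}$, and separating variables from $x=0$ to $x=\infty$ yields $T^*\le(\gamma/B)\,I_\infty=(2KB/3)^{-1/3}I_\infty$, which is exactly the expression in the theorem.

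The two technical points that need care are the following. First, the integration by parts at $z=\infty$ has to be rigorously justified over the lifespan of the local solution; this should follow by combining the exponential decay encoded in $\phi$ with the local $H^s$ well--posedness of Theorem \ref{local wellposedness} and a standard continuation argument on $[0,T^*)$. Second, translating divergence of $F(t)$ into blowup of $\|u(t)\|_{H^2(\Omega)}$: the chain $F\le\int_\Omega u\phi\le\|u\|_{L^\infty}\int_\Omega\phi$ together with the three--dimensional embedding $H^2\hookrightarrow L^\infty$ forces $\|u(t)\|_{H^2}\to\infty$ whenever $F(t)\to\infty$, so the solution cannot be extended in $H^2$ beyond the explicit time $T^*$.
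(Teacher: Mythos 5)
Your proposal follows essentially the same route as the paper: the renormalization $(\log u)_t=2\psi_z$, testing against the simultaneous eigenfunction $\phi$ and against $\phi_z$, the cancellation of the $\Gamma$ boundary term via $\alpha\beta=2\pi^2/a^2$, the Cauchy--Schwarz chain through $\log u\le(\log u)^+\le u$, and the Riccati-type inequality integrated to $T^*$. The only (cosmetic) difference is that you shift $F=\int_\Omega(\log u)\phi$ by $A$ to get $\eta''\ge K\eta^2$ with $\eta(0)=0$, whereas the paper takes $F$ to be the double time-integral lower bound of $\bigl(\int_\Omega u^2\phi\bigr)^{1/2}$; both yield the identical blowup time, and your closing remark on $H^2\hookrightarrow L^\infty$ is a correct (and slightly more explicit) way to convert divergence of the functional into $H^2$ blowup.
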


\begin{proof}
By Theorem \ref{local wellposedness}, we know that there exists a
finite time $T>0$ such that the system \myref{model psi
mixed}-\myref{model psi initial} has a unique smooth solution with
$u\in C^1([0,T),H^2(\Omega))$ and $\psi\in
C^1([0,T),H^3(\Omega))$. Let $T_b$ be the largest time such that
the system \myref{model psi mixed}-\myref{model psi mixed bc} with
initial condition $u_0, \psi_0$ has a smooth solution with $u \in
C^1([0,T_b);H^2(\Omega))$ and $\psi \in C^1([0,T_b);H^3(\Omega))$.
We claim that $T_b<\infty$. We prove this by contradiction.

Suppose that $T_b=\infty$, this means that for the given initial
data $u_0, \psi_0$, the system \myref{model psi
mixed}-\myref{model psi initial} has a globally smooth solution $
u \in C^1([0,\infty);H^2(\Omega))$ and $ \psi \in
C^1([0,\infty);H^3(\Omega))$. Multiplying $\phi_z$ to the both
sides of \myref{model-psi-mixed} and integrating over $\Omega$, we
get
\begin{eqnarray}
-\int_\Omega \Delta \psi_t \phi_z \mathd {\Bx}\mathd z=\int_\Omega \left(u^2\right)_{z}\phi_z \mathd {\Bx}\mathd z .
\end{eqnarray}
Note that $u|_{\p\Omega} = 0$ as long as the solution remains smooth.
By integrating by parts and using the boundary condition on $\psi$
and the property of $\phi$
to eliminate the boundary terms, we have
\begin{eqnarray}
-\int_\Omega \psi_{zt} \Delta \phi \mathd {\Bx}\mathd z-\int_{\Ox} \psi_{zt}\phi_z|_{z=0}\mathd {\Bx}-\int_{\Ox} \psi_t \Dx\phi|_{z=0}\mathd {\Bx}
 =\int_\Omega u^2 \phi_{zz}\mathd {\Bx}\mathd z .
\end{eqnarray}
Substituting $\phi$ into the above equation, we obtain
\begin{eqnarray}
\left(\frac{2\pi^2}{a^2}-\al^2\right)\frac{\mathd}{\mathd t}\int_\Omega \psi_z \phi \mathd {\Bx}\mathd z &= \al^2\int_\Omega u^2 \phi \mathd {\Bx}\mathd z-\int_{\Ox}\left.\left(\al\psi_{zt}+
\frac{2\pi^2}{a^2}\psi_t\right)\right|_{z=0}\phi_1(\Bx)
\mathd {\Bx}\nonumber\\
\label{psit-est1}
&= \al^2\int_\Omega u^2 \phi \mathd {\Bx}\mathd z+\int_{\Ox}\left(
\al\beta-\frac{2\pi^2}{a^2}\right)\psi_t|_{z=0}
\phi_1(\Bx) \mathd {\Bx} .
\end{eqnarray}
By the definition of $\al$, we have
\begin{eqnarray}
\al\beta-\frac{2\pi^2}{a^2}=0, \quad \mbox{and} \;\;
\al=\frac{2\pi^2}{\beta a^2}<\frac{\sqrt{2}\,\pi}{a} ,
\end{eqnarray}
since $\beta > \frac{\sqrt{2}\,\pi}{a}$. Thus the boundary
term on the right hand side of \myref{psit-est1} vanishes. We get
\begin{eqnarray}
\label{eq psi mixed infty} \frac{\mathd}{\mathd t}\int_\Omega \psi_z \phi \mathd {\Bx}\mathd z
&=& \frac{\al^2}{2\left(\frac{\pi}{a}\right)^2-\al^2}\int_\Omega u^2
\phi \mathd {\Bx}\mathd z .
\end{eqnarray}

Next, we multiply $\phi$ to \myref{model-logu} and integrate
over $\Omega$. We obtain
\begin{eqnarray}
\label{eq u mixed infty} \frac{\mathd}{\mathd t}\int_\Omega (\log u) \phi
\mathd {\Bx}\mathd z&=&2\int_\Omega \psi_z\phi \mathd {\Bx}\mathd z .
\end{eqnarray}
Combining \myref{eq psi mixed infty} with \myref{eq u mixed infty}, we
have
\begin{eqnarray}
\frac{\mathd^2}{\mathd t^2}\int_\Omega (\log u) \phi
\mathd {\Bx}\mathd z&=&\frac{2\al^2}{2\left(\frac{\pi}{a}\right)^2-\al^2}\int_\Omega
u^2 \phi \mathd {\Bx}\mathd z .
\end{eqnarray}
Integrating the above equation twice in time, we get
\begin{eqnarray}
\int_\Omega (\log u) \phi \mathd {\Bx}\mathd z &=& \frac{2\al^2}{2\left(\frac{\pi}{a}\right)^2-\al^2}\int_0^t\int_0^s\left(\int_\Omega u^2 \phi \mathd {\Bx}\mathd z\right)d\tau ds+A+Bt\nonumber\\
&\ge&\frac{2\al^2}{2\left(\frac{\pi}{a}\right)^2-\al^2}\int_0^t\int_0^s\left(\int_\Omega u^2 \phi \mathd {\Bx}\mathd z\right)d\tau ds+Bt .
\label{est-logu-1}
\end{eqnarray}
Note that $u > 0 $ for $(\Bx,z) \in \Omega$ and $t < T_b$.
It is easy to show that
\begin{eqnarray}
\int_\Omega (\log u) \phi \mathd {\Bx}\mathd z&\leq&
\int_\Omega (\log u)^+ \phi \mathd {\Bx}\mathd z
\leq \int_\Omega u\phi \mathd {\Bx}\mathd z\nonumber\\
&\leq&
\left(\int_\Omega \phi \mathd {\Bx}\mathd z\right)^{1/2}\left(\int_\Omega u^2\phi \mathd {\Bx}\mathd z\right)^{1/2}\nonumber\\
&=& \frac{2a}{\sqrt{\al}\,\pi}\left(\int_\Omega u^2\phi \mathd {\Bx}\mathd z\right)^{1/2},
\label{est-logu-2}
\end{eqnarray}
where $(\log u)^+ =\max(\log u, 0)$.
Combining \myref{est-logu-1} with \myref{est-logu-2} gives us the
crucial nonlinear dynamic estimate:
\begin{eqnarray}
\left(\int_\Omega u^2\phi \mathd {\Bx}\mathd z\right)^{1/2} \ge \frac{2\al^2}{2\left(\frac{\pi}{a}\right)^2-\al^2}\frac{\sqrt{\al}\,\pi}{2a}\int_0^t\int_0^s\left(\int_\Omega u^2 \phi \mathd {\Bx}\mathd z\right)d\tau ds
+\frac{\sqrt{\al}\,\pi}{2a}Bt.
\label{est-key}
\end{eqnarray}
Define
\begin{eqnarray}
\label{F-def}
F(t)=\frac{\pi\al^{5/2}}{a(2\left(\frac{\pi}{a}\right)^2-\al^2)}
\int_0^t\int_0^s\left(\int_\Omega u^2 \phi \mathd {\Bx}\mathd z\right)d\tau ds
+\frac{\sqrt{\al}\,\pi}{2a}Bt.
\end{eqnarray}
Then we have $\D F(0)=0$ and
$ \D F_t(0)=\frac{\sqrt{\al}\,\pi}{2a}B> 0$.
By differentiating \myref{F-def} twice in time and substituting
the resulting equation into \myref{est-key}, we obtain
\begin{eqnarray}
\label{eq F psi}
\frac{d^2F}{dt^2}&=&
\frac{\pi\al^{5/2}}{a(2\left(\frac{\pi}{a}\right)^2-\al^2)}
\int_\Omega u^2 \phi \mathd {\Bx}\mathd z\geq DF^2,
\end{eqnarray}
where $\D D=\frac{\pi\al^{5/2}}{a(2\left(\frac{\pi}{a}\right)^2-\al^2)}$.
Note that
$F_t=D\int_0^t\left(\int_\Omega u^2 \phi \mathd {\Bx}\mathd z\right) ds
+\frac{\sqrt{\al}\,\pi}{2a}B >0$. Multiplying
$F_t$ to \myref{eq F psi} and integrating in time, we get
\begin{eqnarray}
\label{dF psi}
\frac{dF}{dt}\geq \sqrt{\frac{2D}{3}F^3+C},
\end{eqnarray}
where $\D C= (F_t(0))^2 = \frac{\al\pi^2}{4a^2}B^2$.
Define
\begin{eqnarray*}
I(x)=\int_0^{x} \frac{dy}{\sqrt{y^3+1}},\quad J=\left(\frac{3C}{2D}\right)^{1/3} .
\end{eqnarray*}
Then, integrating \myref{dF psi} in time gives
\begin{eqnarray}
\label{blowup inequa}
I\left(\frac{F(t)}{J}\right)\geq \frac{\sqrt{C}t}{J},\quad \forall t \in \left[0,T^*\right] .
\end{eqnarray}
Note that both $I$ and $F$ are strictly increasing functions, and
$I(x)$ is uniformly bounded for all $x>0$ while the right hand side
increases linearly in time. It follows from \myref{blowup inequa}
that $F(t)$ must blow up no later than
\begin{eqnarray*}
\frac{J}{\sqrt{C}}I_\infty=T^*.
\end{eqnarray*}
This contradicts with the assumption that the 3D model has a
globally smooth solution. This contradiction implies that the
solution of the system \myref{model psi mixed} must develop a
finite time singularity no later than $T^*$.
\end{proof}

\begin{remark}
As we can see in the proof of Theorem \ref{theorem infty mix},
the same conclusion still holds if we replace the boundary
condition
\[
(\psi_z + \beta \psi)|_{z=0} = 0,
\]
by the following integral constraint
\begin{eqnarray*}
\int_{\Ox}\left.\left(\psi_z+
\beta \psi\right)\right|_{z=0}\sin \frac{\pi x_1}{a}\sin \frac{\pi x_2}{a}\mathd {\Bx}=0.
\end{eqnarray*}
\end{remark}

\subsection{Blow-up in a bounded domain}

In this subsection, we will prove finite time blow-up of the 3D
model in a  bounded domain. First, we formulate the initial
boundary problem of the 3D model as follows:
\begin{eqnarray}
\label{model psi bounded}
&&\left\{\begin{array}{rcl}
u_t &=&2u\psi_z\\
-\Delta \psi_t&=&\left(u^2\right)_z
\end{array}\right., \quad (\Bx,z)\in \Omega=\Ox\times(0,b),\\
\label{model psi bounded bc}
&&\psi|_{\p\Omega\backslash\Gamma}=0,\quad
\left(\psi_z+\beta \psi\right)|_\Gamma=0,\\
&& \psi|_{t=0}=\psi_0(\Bx,z),\quad u|_{t=0}=u_0(\Bx,z)\ge0,\nonumber
\end{eqnarray}
where $\Bx=(x_1,x_2),\;\Ox=(0,a)\times(0,a),\;
\Gamma=\left\{(\Bx,z)\in \Omega \; |\; \Bx \in \Ox, \;z=0 \right\}$.
We can get a similar blow-up result which is summarized below:
\begin{theorem}
\label{theorem bounded mix}
Assume that $u_0\in H^2(\Omega)$, $u_0|_{\p \Omega}=0$, $u_0|_{\Omega}>0$,
$\psi_0\in H^3(\Omega)$ and satisfies \myref{model psi bounded bc}.
Further, we assume that
$\beta \in S_b$ as defined in Lemma \ref{reisz} and satisfies
$\D \beta>\frac{\sqrt{2}\pi}{a}
\left(\frac{e^{\sqrt{2}\pi b/a}+e^{-\sqrt{2}\pi b/a}}
{e^{\sqrt{2}\pi b/a}-e^{-\sqrt{2}\pi b/a}}\right)$. Define
\begin{eqnarray}
\label{phi mixed bounded}
\phi(\Bx,z)=\frac{e^{-\alpha (z-b)}+e^{\alpha (z-b)}}{2}\sin \frac{\pi
x_1}{a}\sin \frac{\pi x_2}{a},\quad (\Bx,z)\in \Omega,
\end{eqnarray}
where $\al$ satisfies $0<\al<\sqrt{2}\pi/a$ and
$ 2\left(\frac{\pi}{a}\right)^2
\frac{e^{\al b}-e^{-\al b}}{\al (e^{\al b}+e^{-\al b})} = \beta$.
Let
\begin{eqnarray*}
A=\int_\Omega (\log u_0)\phi \mathd {\Bx}\mathd z,\quad B=2\int_\Omega \psi_{0z}\phi \mathd {\Bx}\mathd z,\quad
D=\frac{\pi\al^{5/2}}{a(2\left(\frac{\pi}{a}\right)^2-\al^2)}
,\quad I_\infty=\int_0^\infty \frac{\mathd {\Bx}}{\sqrt{x^3+1}}.
\end{eqnarray*}
If $A>0$ and $B>0$,
then the solution of
\myref{model psi bounded}-\myref{model psi bounded bc}
will blow up no later than
\begin{eqnarray*}
T^*=\left(\frac{2DB}{3}\frac{\sqrt{\al} \pi}{2a}\right)^{-1/3}I_\infty.
\end{eqnarray*}
\end{theorem}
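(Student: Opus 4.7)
The plan is to mirror the semi-infinite argument (Theorem 3.1) as closely as possible, with the modifications needed to accommodate the second boundary at $z=b$. I proceed by contradiction: assume that $T_b=\infty$, so that $u,\psi_z\in C^1([0,\infty);H^2(\Omega))$, and drive a contradiction by showing that a certain weighted integral blows up in finite time.

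First I would verify that the chosen weight function $\phi(\Bx,z)=\cosh(\alpha(z-b))\sin(\pi x_1/a)\sin(\pi x_2/a)$ is the correct analogue of \myref{eqn-eigenfunction}: a direct computation gives $-\Delta\phi=\lambda_1\phi$ with $\lambda_1=2(\pi/a)^2-\alpha^2>0$ (since $\alpha<\sqrt{2}\pi/a$) and $\phi_{zz}=\lambda_2\phi$ with $\lambda_2=\alpha^2>0$. The decisive boundary properties are: $\phi=0$ on the lateral faces (from the sines), $\phi_z|_{z=b}=\alpha\sinh(0)\,(\cdots)=0$, and at $z=0$ one has $\phi|_{z=0}=\cosh(\alpha b)\,(\cdots)$ and $\phi_z|_{z=0}=-\alpha\sinh(\alpha b)\,(\cdots)$.

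Next I would test the renormalized system against $\phi$ and $\phi_z$. Multiplying \myref{model-logu} by $\phi$ and integrating over $\Omega$ trivially gives $\frac{d}{dt}\int_\Omega(\log u)\phi\,\mathd\Bx\mathd z=2\int_\Omega\psi_z\phi\,\mathd\Bx\mathd z$. The substantive computation is to multiply \myref{model-psi-mixed} by $\phi_z$ and integrate by parts twice. Using $u|_{z=0}=u|_{z=b}=0$ the right-hand side becomes $-\lambda_2\int_\Omega u^2\phi\,\mathd\Bx\mathd z$. On the left-hand side the lateral and top ($z=b$) contributions vanish because $\psi|_{\partial\Omega\setminus\Gamma}=0$ and $\phi_z|_{z=b}=0$; the bulk term is $-\lambda_1\int_\Omega \psi_{tz}\phi\,\mathd\Bx\mathd z$ after an integration by parts in $z$ (using again $\psi_t|_{z=b}=0$). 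The remaining $\Gamma$ boundary term reduces, after using $\psi_{tz}|_\Gamma=-\beta\psi_t|_\Gamma$, to a multiple of $\int_{\Ox}\psi_t|_\Gamma\bigl[\beta\phi_z+2(\pi/a)^2\phi\bigr]|_\Gamma\,\mathd\Bx$; the precise choice $2(\pi/a)^2(e^{\alpha b}-e^{-\alpha b})/[\alpha(e^{\alpha b}+e^{-\alpha b})]=\beta$ is exactly what forces the bracket to be zero. This delivers $\lambda_1\frac{d}{dt}\int_\Omega\psi_z\phi\,\mathd\Bx\mathd z=\lambda_2\int_\Omega u^2\phi\,\mathd\Bx\mathd z$.

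Combining the two identities gives $\frac{d^2}{dt^2}\int_\Omega(\log u)\phi\,\mathd\Bx\mathd z=\frac{2\lambda_2}{\lambda_1}\int_\Omega u^2\phi\,\mathd\Bx\mathd z$. Integrating twice in time and using $A>0$, $B>0$ produces a lower bound in terms of $A+Bt$. Coupling this with the elementary bound $\int_\Omega(\log u)\phi\le\int_\Omega(\log u)^+\phi\le\int_\Omega u\phi\le(\int_\Omega\phi)^{1/2}(\int_\Omega u^2\phi)^{1/2}$ yields the nonlinear integral inequality analogous to \myref{est-key}. Setting $F(t):=D\int_0^t\!\int_0^s\!(\int_\Omega u^2\phi)\,d\tau\,ds+\frac{\sqrt{\alpha}\,\pi}{2a}Bt$ gives $F(0)=0$, $F_t(0)>0$, and $F''\ge DF^2$ via squaring. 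Multiplying by $F_t>0$ and integrating once produces $F_t\ge\sqrt{(2D/3)F^3+C}$ with $C=(F_t(0))^2$; since $\int_0^\infty dx/\sqrt{x^3+1}=I_\infty<\infty$, separation of variables forces $F$ to become infinite no later than $T^*=(J/\sqrt{C})I_\infty$, contradicting global regularity.

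The main obstacle is the bookkeeping in the integration-by-parts step: one must keep track of three types of boundary contributions (lateral, $z=b$, and $z=0$) and verify that the specific interplay between the symmetric $\cosh(\alpha(z-b))$ profile, the Neumann-like vanishing at $z=b$, and the Robin-type condition at $\Gamma$ conspires — through the given compatibility relation between $\alpha$ and $\beta$ — to eliminate every surface term. Everything after that identity is essentially routine ODE-type analysis of the scalar inequality $F''\ge DF^2$, identical in structure to the semi-infinite case. The only computational difference is that $\int_\Omega\phi\,\mathd\Bx\mathd z$ now equals $(2a/\pi)^2\sinh(\alpha b)/\alpha$ rather than the simpler semi-infinite value, which merely changes the explicit constants appearing in $D$ and $T^*$ without affecting the structure of the argument.
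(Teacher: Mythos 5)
Your proposal follows essentially the same route as the paper's proof: the same weight function $\phi=\cosh(\alpha(z-b))\sin(\pi x_1/a)\sin(\pi x_2/a)$, the same integration-by-parts bookkeeping in which the compatibility relation $2(\pi/a)^2(e^{\alpha b}-e^{-\alpha b})/[\alpha(e^{\alpha b}+e^{-\alpha b})]=\beta$ annihilates the $\Gamma$ boundary term, the same identity $\frac{d^2}{dt^2}\int_\Omega(\log u)\phi\,\mathd\Bx\mathd z=\frac{2\alpha^2}{2(\pi/a)^2-\alpha^2}\int_\Omega u^2\phi\,\mathd\Bx\mathd z$, and the same reduction to $F''\ge DF^2$. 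Your closing observation that $\int_\Omega\phi\,\mathd\Bx\mathd z=(2a/\pi)^2\sinh(\alpha b)/\alpha$ differs from the semi-infinite value (so the Cauchy--Schwarz constant, and hence the explicit $D$ and $T^*$, should strictly be adjusted) is in fact more careful than the paper, which simply reuses the semi-infinite constants after saying ``follow the exactly same procedure.''
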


\begin{proof}
We follow the same strategy as in the proof of Theorem
\ref{theorem infty mix}. By Theorem \ref{local wellposedness}, we
know that there exists a finite time $T>0$ such that the system
\myref{model psi bounded} has a unique smooth solution with $u\in
C^1([0,T),H^2(\Omega))$ and $\psi\in C^1([0,T),H^3(\Omega))$ for
$0\le t<T$. Let $T_b$ be the largest time time such that the
system \myref{model psi bounded}-\myref{model psi bounded bc} with
initial condition $u_0, \psi_0$ has a smooth solution with $u \in
C^1([0,T_b);H^2(\Omega))$ and $\psi \in C^1([0,T_b);H^3(\Omega))$.
We claim that $T_b<\infty$. We prove this by contradiction.

Suppose that $T_b=\infty$, this means that for the given
initial data $u_0, \psi_0$, the system \myref{model psi bounded}
has a globally smooth solution with
$u \in C^1([0,\infty);H^2(\Omega))$ and
$\psi \in C^1([0,\infty);H^3(\Omega))$.
Multiplying $\phi_z$ to the both sides of the $\psi$-equation
and integrating over $\Omega$, we get
\begin{eqnarray}
-\int_\Omega \Delta \psi_t \phi_z \mathd {\Bx}\mathd z=\int_\Omega \left(u^2\right)_{z}\phi_z \mathd {\Bx}\mathd z .
\end{eqnarray}
Note that $u|_{\p \Omega} = 0$ as long as the solution remains smooth.
By integrating by parts and using the boundary condition on $\psi$
and the property of $\phi$ to eliminate the boundary terms, we have
\begin{eqnarray}
-\int_\Omega \psi_{zt} \Delta \phi \mathd {\Bx}\mathd z-\int_{\Ox} \psi_{zt}\phi_z|_{z=0}\mathd {\Bx}-\int_{\Ox} \psi_t \Dx\phi|_{z=0}\mathd {\Bx}
 =\int_\Omega u^2 \phi_{zz}\mathd {\Bx}\mathd z .
\label{integral-psi-1}
\end{eqnarray}
Substituting $\phi$ to \myref{integral-psi-1}
and using the boundary condition for $\psi$, we obtain
\begin{eqnarray}
&&\left(2\frac{\pi^2}{a^2}-\al^2\right)\frac{\mathd}{\mathd t}\int_\Omega \psi_z \phi \mathd {\Bx}\mathd z\nonumber\\
&=& \al^2\int_\Omega u^2 \phi \mathd {\Bx}\mathd z-\int_{\Ox}\left.\left(\frac{\al}{2}\left(e^{\al b}-e^{-\al b}\right)\psi_{zt}+
\frac{\pi^2}{a^2}\left(e^{\al b}+e^{-\al b}\right)\psi_t\right)\right|_{z=0}\phi_1(\Bx)\mathd {\Bx}\nonumber\\
&=& \al^2\int_\Omega u^2 \phi \mathd {\Bx}\mathd z+\int_{\Ox}\left(
\frac{\al}{2}\left(e^{\al b}-e^{-\al b}\right)\beta-\frac{\pi^2}{a^2}\left(e^{\al b}+e^{-\al b}\right)\right)\psi_t|_{z=0}\phi_1(\Bx)
\mathd {\Bx}\nonumber\\
\label{ori eqa mix periodic}
&=&\al^2\int_\Omega u^2 \phi \mathd {\Bx}\mathd z+\frac{\al}{2}\left(e^{\al b}-e^{-\al b}\right)\int_{\Ox}\left(
\beta-2\left(\frac{\pi}{a}\right)^2\frac{e^{\al b}+e^{-\al b}}{\al\left(e^{\al b}-e^{-\al b}\right)
}\right)\psi_t|_{z=0}\phi_1(\Bx) \mathd {\Bx} ,\quad\quad\quad
\end{eqnarray}
where $\phi_1(\Bx) = \sin \frac{\pi x_1}{a}\sin \frac{\pi x_2}{a}$.
Let
$\D h(\al)=2\left(\frac{\pi}{a}\right)^2
\frac{e^{\al b}+e^{-\al b}}{\al\left(e^{\al b}-e^{-\al b}\right)}$.
Direct computations show that
$\frac{d}{d \al} h(\al) < 0$ for all $\al > 0$. Thus we have
\begin{eqnarray}
\frac{\sqrt{2}\pi}{a}
\left(\frac{e^{\sqrt{2} \pi b/a}+e^{-\sqrt{2} \pi b/a}}
{e^{\sqrt{2} \pi b/a}-e^{-\sqrt{2} \pi b/a}} \right )
= h\left (\frac{\sqrt{2}\pi}{a}\right )
< h(\al) < h(0_+) = \infty, \quad
0 <\al<\frac{\sqrt{2}\pi}{a}.
\end{eqnarray}
Since $\beta > h\left (\frac{\sqrt{2}\pi}{a}\right )$ by
assumption, we can choose a unique $\al$ with
$0<\al<\frac{\sqrt{2}\pi}{a}$ such that
\begin{equation}
\label{alpha-beta}
\frac{2\pi^2}{a^2}
\frac{e^{\al b}+e^{-\al b}}
{\al(e^{\al b}-e^{-\al b})} = \beta.
\end{equation}
With this choice of $\al$, the boundary term in
\myref{ori eqa mix periodic} vanishes. Therefore we obtain
\begin{eqnarray}
\label{eq psi mixed periodic}
\frac{\mathd}{\mathd t}\int_\Omega \psi_z \phi \mathd {\Bx}\mathd z
&=& \frac{\al^2}{2\left(\frac{\pi}{a}\right)^2-\al^2}\int_\Omega u^2
\phi \mathd {\Bx}\mathd z .
\end{eqnarray}

Next, we multiply $\phi$ to \myref{model-logu} and integrate
over $\Omega$. We get
\begin{eqnarray}
\label{eq u mixed infty-1}
\frac{\mathd}{\mathd t}\int_\Omega (\log u) \phi
\mathd {\Bx}\mathd z&=&2\int_\Omega \psi_z\phi \mathd {\Bx}\mathd z .
\end{eqnarray}
Combining \myref{eq u mixed infty-1} with
\myref{eq psi mixed periodic}, we get
\begin{eqnarray}
\frac{\mathd^2}{\mathd t^2}\int_\Omega (\log u) \phi
\mathd {\Bx}\mathd z&=&\frac{2\al^2}{2\left(\frac{\pi}{a}\right)^2-\al^2}\int_\Omega
u^2 \phi \mathd {\Bx}\mathd z .
\end{eqnarray}
Now we can follow the exactly same procedure as in the proof of
Theorem \ref{theorem infty mix} to prove that the 3D model
must develop a finite time blow-up.
\end{proof}

\begin{remark}
We remark that the same conclusion is still true if we
replace the Dirichlet boundary condition $\psi|_{z=b} =0$
by the Neumann boundary condition $\psi_z|_{z=b}=0$.
The only difference is that the weight function $\phi$
is now changed to
\begin{eqnarray}
\label{phi mixed bounded neumann}
\phi(\Bx,z)=\frac{e^{-\alpha (z-b)}-e^{\alpha (z-b)}}{2}\sin \frac{\pi
x_1}{a}\sin \frac{\pi x_2}{a},\quad (\Bx,z)\in \Omega,
\end{eqnarray}
where $0<\al<\sqrt{2}\pi/a$, and
$\beta$ satisfies a variant of \myref{set-S} in Lemma \ref{reisz} and
\begin{eqnarray}
\D \frac{\sqrt{2}\pi}{a}
\left(\frac{e^{\sqrt{2}\pi b/a}-e^{-\sqrt{2}\pi b/a}}
{e^{\sqrt{2}\pi b/a}+e^{-\sqrt{2}\pi b/a}} \right )
< \beta <2b\left(\frac{\pi}{a}\right)^2.
\end{eqnarray}
We omit the proof here.
\end{remark}

\subsection{Blow-up of a generalized 3D model}

In this section, we study singularity formation of a generalized
3D model by changing the sign of the Laplace operator in
the $\psi$-equation \myref{model-psi-mixed}.
Specifically, we consider the following
generalized 3D model:
\begin{eqnarray}
\label{model psi opposite}
&&\left\{\begin{array}{rcl}
u_t &=&2u \psi_z\\
\Delta \psi_t&=&\left(u^2\right)_z
\end{array}\right., \quad (\Bx,z)\in \Omega=\Ox\times(0,a)=(0,a)\times(0,a)\times(0,a).
\end{eqnarray}
The boundary and initial conditions are below
\begin{eqnarray}
\label{BC-opposite}
&&\psi|_{\p\Omega\backslash \Gamma}=0,\quad \psi_z|_{\Gamma}=0,
\quad \Gamma =\{ (\Bx,z) \:|\: \Bx \in \Ox, \; z=0, \; \mbox{or}\; z=a\}
\\
&& \psi|_{t=0}=\psi_0(\Bx,z),\quad u|_{t=0}=u_0(\Bx,z)\ge0. \nonumber
\end{eqnarray}
In this subsection, we will generalize the singularity analysis
presented in the previous subsection to prove
that the solution of the generalized
3D model will develop a finite time singularity.
The main result is summarized in the following theorem.
\begin{theorem}
\label{theorem opposite 0}
Assume that $u_0\in H^2(\Omega)$,
$u_0|_{\p\Omega}=0$, $u_0|_{\Omega}>0$,
$\psi_0\in H^3(\Omega)$ and satisfies \myref{BC-opposite}.
Further, we define
\begin{eqnarray}
\label{phi opposite}
\phi(\Bx,z)=\sin \frac{\pi
x_1}{a}\sin \frac{\pi x_2}{a}\sin \frac{\pi z}{a},\quad (\Bx,z)\in \Omega.
\end{eqnarray}
Let
\begin{eqnarray*}
A=\int_\Omega (\log u_0)\phi \mathd {\Bx}\mathd z,\quad B=2\int_\Omega \psi_{0z}\phi \mathd {\Bx}\mathd z,\quad
I_\infty=\int_0^\infty \frac{\mathd {\Bx}}{\sqrt{x^3+1}} .
\end{eqnarray*}
If $A>0$ and $B>0$,
then the solution of \myref{model psi opposite}-\myref{BC-opposite}
will blow up
no later than $T^*=\left(\frac{B}{18}\right)^{-1/3}I_\infty$.
\end{theorem}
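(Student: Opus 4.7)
The plan is to argue by contradiction and reproduce the renormalization-plus-weight-function strategy of Theorems \ref{theorem infty mix} and \ref{theorem bounded mix}, now with a weight $\phi$ satisfying Dirichlet conditions on all of $\p\Omega$. Since $u_0>0$ in $\Omega$ and $u_t = 2u\psi_z$, positivity of $u$ is preserved as long as the solution stays smooth, so the renormalized system $(\log u)_t = 2\psi_z$, $\Delta\psi_t = (u^2)_z$ is available. The crucial structural fact is that $\phi = \sin(\pi x_1/a)\sin(\pi x_2/a)\sin(\pi z/a)$ simultaneously satisfies $-\Delta\phi = 3(\pi/a)^2\phi$ and $\phi_{zz} = -(\pi/a)^2\phi$, and vanishes on the entire boundary $\p\Omega$, including $\Gamma$.

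The main analytic step is to test $(\log u)_t = 2\psi_z$ against $\phi$ and $\Delta\psi_t = (u^2)_z$ against $\phi_z$, then integrate by parts. All boundary contributions should drop out: on the lateral faces $\p\Ox\times(0,a)$, each of $\phi$, $\phi_z$, $\psi$, and $\psi_t$ vanishes; on $\Gamma$ one has $\psi_z|_\Gamma = 0 \Rightarrow \psi_{tz}|_\Gamma = 0$, while $\phi|_\Gamma = 0$ forces $\phi_{zz}|_\Gamma = 0$, and $u|_\Gamma = 0$ kills the quadratic boundary term. Combining this with the eigenvalue identities produces
\begin{eqnarray*}
\frac{3\pi^2}{a^2}\frac{\mathd}{\mathd t}\int_\Omega \psi_z\phi\,\mathd{\Bx}\,\mathd z
= \frac{\pi^2}{a^2}\int_\Omega u^2\phi\,\mathd{\Bx}\,\mathd z,
\end{eqnarray*}
hence $\frac{d}{dt}\int \psi_z\phi = \frac{1}{3}\int u^2\phi$, which together with $\frac{d}{dt}\int (\log u)\phi = 2\int \psi_z\phi$ gives the closed identity
\begin{eqnarray*}
\frac{\mathd^2}{\mathd t^2}\int_\Omega(\log u)\phi\,\mathd{\Bx}\,\mathd z
= \frac{2}{3}\int_\Omega u^2\phi\,\mathd{\Bx}\,\mathd z.
\end{eqnarray*}
From here I would follow the scheme used in Theorem \ref{theorem infty mix} essentially verbatim: integrate twice in time using $A>0$ and $B>0$, apply $\log u \le (\log u)^+ \le u$ together with Cauchy--Schwarz to obtain $\int(\log u)\phi\le(\int\phi)^{1/2}(\int u^2\phi)^{1/2}$, and introduce $F(t) = Bt+\frac{2}{3}\int_0^t\int_0^s\int_\Omega u^2\phi\,\mathd\tau\,\mathd s$. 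The nonlinear dynamic inequality $F''\ge \frac{2}{3\int\phi}F^2$ then follows, and multiplying by $F'>0$ and integrating yields $(F')^2 \ge B^2 + \frac{4}{9\int\phi}F^3$. A separation-of-variables argument using the change of variable that produces $I_\infty$ forces $F$, and hence $\int u^2\phi$, to blow up by the explicit time $T^*$ stated in the theorem, contradicting the assumption of global smoothness in $H^2$.

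The main subtlety to watch is the sign flip of $\Delta$ in the $\psi$-equation, which would normally reverse the sign of the coupling constant in the identity for $\frac{d}{dt}\int \psi_z\phi$. The strategy still works because $\phi$ now has a \emph{negative} eigenvalue $-(\pi/a)^2$ for $\p_z^2$, and this sign precisely compensates for the flipped $\Delta$, restoring the essential positive coupling $\frac{d}{dt}\int \psi_z\phi \propto +\int u^2\phi$. A secondary technical point is to check that the pure Neumann condition $\psi_z|_\Gamma = 0$ (in place of the Robin-type condition of the earlier theorems) eliminates all boundary terms; this is automatic here because $\phi$ itself vanishes on $\Gamma$, so no tuning of an auxiliary parameter analogous to the $\al$ of Theorems \ref{theorem infty mix}--\ref{theorem bounded mix} is required.
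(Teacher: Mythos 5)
Your proposal is correct and follows essentially the same route as the paper: renormalize via $\log u$, test the two equations against $\phi$ and $\phi_z$, use the simultaneous eigenfunction identities $-\Delta\phi=3(\pi/a)^2\phi$ and $\phi_{zz}=-(\pi/a)^2\phi$ to obtain $\frac{d^2}{dt^2}\int_\Omega(\log u)\phi = \frac{2}{3}\int_\Omega u^2\phi$, and then run the same ODE blow-up argument as in the semi-infinite-domain theorem. Your observation that the negative $\partial_z^2$-eigenvalue of $\phi$ exactly compensates the flipped sign of $\Delta$ in the $\psi$-equation is precisely the point of this variant (and is handled correctly, even though the paper's own intermediate displays carry over a spurious minus sign from the earlier proofs).
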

\begin{proof}
First, by using an argument similar to the local well-posedness
result in Theorem \ref{local wellposedness}, we can prove that the
system \myref{model psi opposite}-\myref{BC-opposite} is locally well-posed.
We prove the theorem by contradiction.
Suppose that the system \myref{model psi opposite}-\myref{BC-opposite}
has a globally smooth solution with
$u\in C^1([0,\infty);H^2(\Omega))$ and
$\psi \in C^1([0,\infty);H^3(\Omega))$.
Multiplying $\phi_z$ to the both sides of the $\psi$-equation
and integrating over $\Omega$, we get
\begin{eqnarray}
-\int_\Omega \Delta \psi_t \phi_z \mathd {\Bx}\mathd z=\int_\Omega \left(u^2\right)_{z}\phi_z \mathd {\Bx}\mathd z .
\end{eqnarray}
Note that $u|_{\p\Omega} = 0$ as long as the solution remains smooth.
By integrating by parts and using the boundary condition on $\psi$
and the property of $\phi$ to eliminate the boundary terms, we have
\begin{eqnarray}
-\int_\Omega \psi_{zt} \Delta \phi \mathd {\Bx}\mathd z=\int_\Omega u^2 \phi_{zz}\mathd {\Bx}\mathd z .
\label{integral-psi-1 opposite}
\end{eqnarray}
Substituting $\phi$ to \myref{integral-psi-1 opposite}
and using the boundary condition for $\psi$, we obtain
\begin{eqnarray}
\label{eqn psiz opposite}
\frac{\mathd}{\mathd t}\int_\Omega \psi_z \phi \mathd {\Bx}\mathd z= \frac{1}{3}\int_\Omega u^2 \phi \mathd {\Bx}\mathd z .
\end{eqnarray}
Next, we multiply $\phi$ to \myref{model-logu} and integrate
over $\Omega$. We obtain
\begin{eqnarray}
\label{log u opposite}
\frac{\mathd}{\mathd t}\int_\Omega (\log u) \phi \mathd {\Bx}\mathd
z=2\int_\Omega \psi_z\phi \mathd {\Bx}\mathd z .
\end{eqnarray}
Combining \myref{eqn psiz opposite} with \myref{log u opposite}, we obtain
\begin{eqnarray}
\frac{\mathd^2}{\mathd t^2}\int_\Omega (\log u) \phi \mathd {\Bx}\mathd z=\frac{2}{3}\int_\Omega u^2 \phi \mathd {\Bx}\mathd z .
\end{eqnarray}
Integrating the above equation twice in time, we get
\begin{eqnarray}
\label{final ineq opposite} \int_\Omega (\log u) \phi \mathd
{\Bx}\mathd z &=& \frac{2}{3}\int_0^t\int_0^s\left(\int_\Omega u^2
\phi \mathd {\Bx}\mathd z\right)d\tau ds+A+Bt .
\end{eqnarray}
Using \myref{final ineq opposite}, following the same argument as in
Theorem \ref{theorem infty mix}, we can prove that the solution of the initial boundary value problem \myref{model psi opposite}-\myref{BC-opposite}
blows up no later than $T^*$.
\end{proof}

\section{Blow-up of the 3D model with partial viscosity}

In this section, we prove finite blow-up of the 3D model
with partial viscosity. Specifically, we consider the
following initial boundary value problem in a semi-infinite
domain:
\begin{eqnarray}
\label{model psi mixed vis} &&\left\{\begin{array}{rcl}
u_t &=&2u\psi_z\\
\omega_t&=&\left(u^2\right)_z+\nu \Delta \omega\\
-\Delta\psi&=&\omega .
\end{array}\right., \quad (\Bx,z)\in \Omega=\Ox\times(0,\infty),
\end{eqnarray}
The initial and boundary conditions are given as follows:
\begin{eqnarray}
\label{BC-psi-vis}
&&\psi|_{\p\Omega\backslash\Gamma}=0,\quad \left(\psi_z+\beta \psi\right)|_\Gamma=0, \\
\label{BC-w-vis}
&&\omega|_{\p\Omega\backslash\Gamma}=0,\quad \left(\omega_z+\gamma \omega\right)|_\Gamma=0, \\
&& \omega|_{t=0}=\omega_0(\Bx,z),\quad u|_{t=0}=u_0(\Bx,z)\ge0 ,
\label{IC-psi-w2}
\end{eqnarray}
where $\Gamma=\left\{(\Bx,z)\in \Omega \;|\;\Bx \in \Ox, \; z=0\right\}$.

Now we state the main result of this section.
\begin{theorem}
\label{theorem vis blowup mixed}
Assume that $u_0|_{\p\Omega}=0$, $u_{0z}|_{\p\Omega}=0$, 
 $u_0|_{\Omega}>0$,
$u_0 \in H^2(\Omega)$, $\psi_0 \in H^3(\Omega)$,
$\omega_0 \in H^1(\Omega)$, $\psi_0$ satisfies
\myref{BC-psi-vis} and $\omega_0$ satisfies \myref{BC-w-vis}.
Further, we assume that
$\beta \in S_\infty$ as defined in Lemma \ref{reisz} and
$\beta >\frac{\sqrt{2}\pi}{a}$, $\gamma =\frac{2\pi^2}{\beta a^2}$.
Let
\begin{eqnarray}
\label{phi vis infty}
\phi(\Bx,z)=e^{-\al z}\sin \frac{\pi x_1}{a}\sin \frac{\pi x_2}{a},\quad (\Bx,z)\in
\Omega,
\end{eqnarray}
where $\al = \frac{2 \pi^2}{\beta a^2}$ satisfies
$0<\al<\sqrt{2}\pi/a$.
Define
\begin{eqnarray}
&& A=\int_\Omega (\log u_0) \phi \mathd {\Bx}\mathd z,\quad B=-\int_\Omega \omega_0\phi_z
\mathd {\Bx}\mathd z, \quad
D=\frac{2}{2\left(\frac{\pi}{a}\right)^2-\al^2},\\
&&I_\infty=\int_0^\infty \frac{\mathd {\Bx}}{\sqrt{x^3+1}},\quad
T^*=\left(\frac{ \pi\al^3 D^2B}{12a}\right)^{-1/3}I_\infty .
\end{eqnarray}
If $A>0$, $B>0$, and $T^*<(\log
2)\left(\nu\left(\frac{2\pi^2}{a^2}-\al^2\right)\right)^{-1}$,
then the solution of model \myref{model psi mixed vis} with
initial and boundary conditions
\myref{BC-psi-vis}-\myref{IC-psi-w2}
 will develop a finite time singularity before $T^*$.
\end{theorem}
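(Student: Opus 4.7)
The plan is to follow the template of Theorem~\ref{theorem infty mix}, the only genuinely new difficulty being that the damping $\nu\Delta\omega$ in the $\omega$-equation has to be prevented from consuming the driving datum $B$ during the predicted blow-up window. I argue by contradiction and suppose $u,\psi_z\in C^1([0,\infty);H^2(\Omega))$. As before, I rewrite the $u$-equation as $(\log u)_t=2\psi_z$ and test against
\[
\phi(\Bx,z)=e^{-\al z}\sin(\pi x_1/a)\sin(\pi x_2/a),
\]
which, thanks to $\al=2\pi^2/(\beta a^2)$ and $\gamma=\al$, satisfies $-\Delta\phi=\lambda_1\phi$ with $\lambda_1=2(\pi/a)^2-\al^2>0$, $\p_z^2\phi=\al^2\phi$, and the Robin condition $\phi_z+\al\phi=0$ on $\Gamma$. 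These properties render $\phi$ simultaneously compatible with the boundary condition imposed on $\psi$ (the combination $\al\beta-2\pi^2/a^2$ vanishes) and on $\omega$ (matching Robin constants on $\Gamma$), which is exactly what will force all boundary terms to cancel below.

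Because the $\psi$-equation is now elliptic, the auxiliary quantity that plays the role of $\int\psi_z\phi$ is
\[
G(t):=-\int_\Omega \omega\,\phi_z\,\mathd\Bx\mathd z.
\]
I first multiply $-\Delta\psi=\omega$ by $\phi_z$ and integrate by parts twice; all side-wall and far-field contributions vanish, the $\Gamma$-contribution carries the prefactor $\al\beta-2\pi^2/a^2=0$, and I am left with the clean identity $G=\lambda_1\int_\Omega\psi_z\phi$, so in particular $G(0)=B=\lambda_1\int_\Omega\psi_{0z}\phi>0$. Next I differentiate $G$ in time and insert the $\omega$-equation: the flux piece gives $-\int(u^2)_z\phi_z=\al^2\int u^2\phi$ because $u|_{\p\Omega}=0$, and the viscous piece gives $-\nu\int\Delta\omega\cdot\phi_z=-\nu\lambda_1 G$ because the Robin surface integral on $\Gamma$ cancels by $\gamma=\al$. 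Combined with $\frac{d}{dt}\int(\log u)\phi=DG$ with $D=2/\lambda_1$, this yields the coupled system
\[
\frac{d}{dt}\int_\Omega(\log u)\phi\,\mathd\Bx\mathd z=D\,G(t),\qquad \frac{dG}{dt}+\nu\lambda_1 G=\al^2\int_\Omega u^2\phi\,\mathd\Bx\mathd z.
\]

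The technical heart of the argument is to turn the damped ODE for $G$ into a useful pointwise lower bound. Solving with integrating factor $e^{\nu\lambda_1 t}$ and invoking the hypothesis $T^*<(\log 2)/(\nu\lambda_1)$, the exponential kernel satisfies $e^{-\nu\lambda_1(t-s)}>1/2$ throughout $0\le s\le t\le T^*$, so on this interval
\[
G(t)\ge \frac{B}{2}+\frac{\al^2}{2}\int_0^t\!\!\int_\Omega u^2\phi\,\mathd\Bx\mathd z\,\mathd s.
\]
Integrating $\frac{d}{dt}\int(\log u)\phi=DG$ once more in time, discarding the nonnegative constant $A=\int(\log u_0)\phi$, and inserting the Cauchy--Schwarz upper bound \myref{eqn-logu}, I arrive at the closed nonlinear inequality
\[
\left(\int_\Omega u^2\phi\,\mathd\Bx\mathd z\right)^{1/2}\ge\frac{\sqrt{\al}\,\pi}{2a}\left[\frac{DB}{2}\,t+\frac{D\al^2}{2}\int_0^t\!\!\int_0^s\!\!\int_\Omega u^2\phi\,\mathd\Bx\mathd z\,\mathd\tau\,\mathd s\right].
\]

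From here the closing step is identical to Theorem~\ref{theorem infty mix}: setting $F(t)$ equal to the right-hand side gives $F(0)=0$, $F'(0)>0$ explicit, and $F''(t)\ge KF(t)^2$ with $K$ proportional to $\pi\al^{5/2}D^2/a$. Multiplying by $F'$ and integrating in time yields the Riccati-type inequality $F'\ge\sqrt{(2K/3)F^3+F'(0)^2}$, and the change of variable $F=Jy$ with $J^3=3F'(0)^2/(2K)$ reduces matters to $I(F(t)/J)\ge \sqrt{F'(0)^2}\,t/J$, where $I(x)=\int_0^x dy/\sqrt{y^3+1}\le I_\infty$. This forces $\int_\Omega u^2\phi\,\mathd\Bx\mathd z$ to blow up no later than $T^*$, contradicting the assumed global regularity. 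The main obstacle is really the third paragraph above: without the quantitative window $T^*<(\log 2)/(\nu\lambda_1)$ the diffusive decay in the $G$-ODE could erase $B$ before the quadratic driving term has a chance to blow up, and this hypothesis is calibrated precisely to keep the effective forcing at least half of its inviscid value, so that the standard Riccati-type blow-up engine goes through at the cost of only a factor of two in the constants.
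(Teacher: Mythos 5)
Your argument is correct and follows essentially the same route as the paper: the same test function $\phi$, the same elliptic identity converting $\int_\Omega\psi_z\phi$ into $-\lambda_1^{-1}\int_\Omega\omega\phi_z$, the same damped ODE for that quantity with the kernel bounded below by $1/2$ on $[0,T^*]$ via the hypothesis $T^*<(\log 2)/(\nu\lambda_1)$, and the same Riccati closure. The only blemish is cosmetic: the constant in $F''\ge KF^2$ is proportional to $\pi\al^{5/2}D/a$ (one power of $D$), the second power of $D$ entering only through $F'(0)$ in the final expression for $T^*$.
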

\begin{proof}
First of all, we can prove that the 3D model \myref{model psi mixed vis}
with initial and boundary conditions given by 
\myref{BC-psi-vis}-\myref{IC-psi-w2} 
has a unique solution, $u\in C([0,T],H^{2}(\Omega))$, 
$\omega\in C([0,T],H^{1}(\Omega))$ and
$\psi\in C([0,T],H^{3}(\Omega))$ for some $T>0$ depending on
initial data. There are two key ingredients in this analysis.
The first one is to design a Picard iteration for the 3D model.
The second one is to show that the mapping that generates the
Picard iteration is a contraction mapping and the Picard
iteration converges to a fixed point of the Picard mapping
by using the Contraction Mapping Theorem. To establish the contraction 
property of the Picard mapping, we need to use the well-posedness 
property of the heat equation with the same Dirichlet Robin 
boundary condition as $\omega$. The well-posedness analysis
of the heat equation with a mixed Dirichelet Robin boundary
has been studied in the literature. The case of $\gamma >0$ is
more subtle because there is a growing eigenmode. Since the
complete analysis of the local well-posedness of 3D model with 
partial viscosity is quite technical, we will not present the 
analysis here and refer the reader to \cite{HSW11} for the details 
of the analysis.
  
We are now ready to prove the finite time singularity of
the 3D model with partial viscosity with the given initial
boundary data. We will prove the theorem by contradiction. 
Assume that the 3D model \myref{model psi mixed vis} with 
initial and boundary conditions
\myref{BC-psi-vis}-\myref{IC-psi-w2} has a globally smooth
solution, $u \in C^1([0,\infty);H^2(\Omega))$, $\psi \in
C^1([0,\infty);H^3(\Omega))$, and $\omega \in
C^1([0,\infty);H^1(\Omega))$. Multiplying $\phi$ to the both sides
of the $\psi$-equation and integrating over $\Omega$, we get
\begin{eqnarray}
-\int_\Omega \Delta \psi \phi_z \mathd {\Bx}\mathd z=\int_\Omega \omega \phi_z \mathd {\Bx}\mathd z .
\end{eqnarray}
By integrating by parts and using boundary conditions
\myref{BC-psi-vis}-\myref{BC-w-vis} and the property of $\phi$,
we obtain
\begin{eqnarray}
\int_\Omega \psi_{z} \Delta \phi \mathd {\Bx}\mathd z-\int_{\Ox}
\psi_{z}\phi_z|_{z=0}\mathd {\Bx}\mathd z-\int_{\Ox} \psi \Dx \phi|_{z=0}\mathd {\Bx}\mathd z
=\int_\Omega \omega \phi_{z}\mathd {\Bx}\mathd z .
\end{eqnarray}
Substituting $\phi$ defined in \myref{phi vis infty} into the
above equation, we have
\begin{eqnarray}
-\left(\frac{2\pi^2}{a^2}-\al^2\right)\int_\Omega \psi_z \phi \mathd {\Bx}\mathd z
&=& \int_\Omega \omega \phi_{z}
\mathd {\Bx}\mathd z-\int_{\Ox}\left.\left(\al\psi_{z}+
\frac{2\pi^2}{a^2}\psi\right)\right|_{z=0}
\phi_1 (\Bx) \mathd {\Bx}\nonumber\\
&=& \int_\Omega \omega \phi_{z} \mathd {\Bx}\mathd z+\int_{\Ox}\left(
\al\beta-\frac{2\pi^2}{a^2}\right)\psi|_{z=0}
\phi_1(\Bx) \mathd {\Bx},
\label{eqn-psi-vis-bc}
\end{eqnarray}
where $\phi_1(\Bx) = \sin \frac{\pi x_1}{a}\sin \frac{\pi x_2}{a}$.
Since $\beta >\frac{\sqrt{2}\pi}{a}$, we can choose
\begin{eqnarray}
\al =\frac{2 \pi^2}{\beta a^2}<\frac{\sqrt{2}\pi}{a},
\end{eqnarray}
to eliminate the boundary term in \myref{eqn-psi-vis-bc}.
This gives rise to the following identity:
\begin{eqnarray}
\label{eqn-psiz-int}
\int_\Omega \psi_z \phi \mathd {\Bx}\mathd z &=&
-\frac{1}{2\left(\frac{\pi}{a}\right)^2-\al^2}\int_\Omega \omega
\phi_{z} \mathd {\Bx}\mathd z .
\end{eqnarray}
Next, we multiply $\phi_z$ to the both sides of the $\omega$-equation
and integrate over $\Omega$
\begin{eqnarray}
\int_\Omega \omega_t \phi_z \mathd {\Bx}\mathd z = \int_\Omega \left(u^2\right)_z
\phi_z \mathd {\Bx}\mathd z
+\nu\int_\Omega \Delta\omega \phi_z \mathd {\Bx}\mathd z .
\end{eqnarray}
Integrating by parts and using $u|_{\p\Omega} = 0$, we obtain
\begin{eqnarray}
\frac{\mathd}{\mathd t}\int_\Omega \omega \phi_z \mathd {\Bx}\mathd z&=&-\int_\Omega u^2 \phi_{zz}
\mathd {\Bx}\mathd z+ \nu \left(-\int_{\Ox} \omega_z \phi_z|_{z=0}\mathd {\Bx} +\int_{\Ox} \omega
\phi_{zz}|_{z=0}\mathd {\Bx}+\int_\Omega \omega \Delta \phi_z \mathd {\Bx}\mathd z
\right)\nonumber\\
&=&-\al^2\int_\Omega u^2 \phi \mathd {\Bx}\mathd z
-\nu\left(\frac{2\pi^2}{a^2}-\al^2\right)\int_\Omega \omega \phi_z
\mathd {\Bx}\mathd z+\nu\int_{\Ox} (\al \omega_z+\al^2\omega)|_{z=0}
\phi_1(\Bx)\mathd {\Bx}\nonumber\\
&=&-\al^2\int_\Omega u^2 \phi \mathd {\Bx}\mathd z
-\nu\left(\frac{2\pi^2}{a^2}-\al^2\right)\int_\Omega \omega \phi_z
\mathd {\Bx}\mathd z+\nu\int_{\Ox} \al(\al-\gamma)\omega|_{z=0}
\phi_1(\Bx) \mathd {\Bx}\nonumber\\
&=&-\al^2\int_\Omega u^2 \phi \mathd {\Bx}\mathd z
-\nu\left(\frac{2\pi^2}{a^2}-\al^2\right)\int_\Omega \omega \phi_z
\mathd {\Bx}\mathd z ,
\end{eqnarray}
where we have used $\al = \gamma$ to eliminate the boundary
term in the above estimates. Solving the above ordinary equation
for $\int_\Omega \omega \phi_z \mathd {\Bx}\mathd z $ gives
\begin{eqnarray}
\label{eqn-w-int}
\int_\Omega \omega \phi_z \mathd {\Bx}\mathd z=e^{-\lambda t}\int_\Omega
\omega_0\phi_z \mathd {\Bx}\mathd z -\al^2\int_0^t e^{-\lambda(t-s)}\left(\int_\Omega
u^2\phi \mathd {\Bx}\mathd z\right)ds ,
\end{eqnarray}
where $\lambda =\nu\left(\frac{2\pi^2}{a^2}-\al^2\right)$.
Using the reformulated $u$-equation
\myref{model-logu}, \myref{eqn-psiz-int} and \myref{eqn-w-int},
we obtain
\begin{eqnarray}
\frac{\mathd}{\mathd t}\int_\Omega (\log u) \phi \mathd {\Bx}\mathd z&=&2\int_\Omega \psi_z\phi
\mathd {\Bx}\mathd z\\
&=&\frac{2}{2\left(\frac{\pi}{a}\right)^2-\al^2}\left(-e^{-\lambda
t}\int_\Omega \omega_0\phi_z \mathd {\Bx}\mathd z +\al^2\int_0^t
e^{-\lambda(t-s)}\left(\int_\Omega u^2\phi \mathd {\Bx}\mathd z\right)ds\right). \nonumber
\end{eqnarray}
Integrating the above equation in time, we get
\begin{eqnarray}
\int_\Omega (\log u) \phi \mathd {\Bx}\mathd z&=&\int_\Omega (\log u_0) \phi
\mathd {\Bx}\mathd z-\frac{2}{2\left(\frac{\pi}{a}\right)^2-\al^2}
\left (\frac{1-e^{-\lambda t}}{\lambda}\right )
 \left (-\int_\Omega \omega_0\phi_z
\mathd {\Bx}\mathd z \right ) \nonumber \\
&& +\frac{2\al^2}{2\left(\frac{\pi}{a}\right)^2-\al^2}\int_0^t
\int_0^s e^{-\lambda(s-\tau)}\left(\int_\Omega u^2\phi
\mathd {\Bx}\mathd z\right)d\tau ds .
\label{logu-est-vis}
\end{eqnarray}
Let $\D T_0=\frac{\log 2}{\lambda}$, then $e^{-\lambda
t}\ge\frac{1}{2}$ over the interval $[0,T_0]$. Note that
$\frac{d}{dt}\left(\frac{1-e^{-\lambda t}}{\lambda}\right )
= e^{-\lambda t}\ge\frac{1}{2}$ for $ 0 \le t \le T_0$.
This implies that $\frac{1-e^{-\lambda t}}{\lambda} \ge \frac{t}{2} $
for $0 \le t \le T_0$. Thus we have from \myref{logu-est-vis} that
\begin{eqnarray}
\int_\Omega (\log u) \phi \mathd {\Bx}\mathd z&\ge&A+\frac{1}{2}DBt
+\frac{1}{2}D\al^2\int_0^t \int_0^s \left(\int_\Omega u^2\phi
\mathd {\Bx}\mathd z\right)d\tau ds,
\end{eqnarray}
for all $ t\in [0,T_0] $.
Now we can follow exactly the same procedure as in the proof of
Theorem \ref{theorem infty mix} to prove that the 3D model
must develop a finite time blow-up before
\begin{eqnarray}
T^*=\left(\frac{\al^3 \pi D^2B}{12a}\right)^{-1/3}I_\infty.
\end{eqnarray}
Since $T^*<T_0$, we conclude that the solution must blow up
before $T^*$.
\end{proof}

\begin{remark}
We can also prove the finite time blow-up of the 3D model with
partial viscosity in a bounded domain following a similar
argument. We omit the analysis here.
\end{remark}

\section{Blow-up of the 3D model with conservative boundary conditions}

In this section, we will consider boundary conditions for $\psi$
that will conserve energy. Under some additional condition,
we can prove that the solution of the 3D model with conservative
boundary conditions will also develop a finite time singularity.

\subsection{Blow-up in a semi-infinite domain}

Consider the following initial boundary value problem:
\begin{eqnarray}
\label{model psi conserved}
&&\left\{\begin{array}{rcl}
u_t &=&2u\psi_z\\
-\Delta \psi_t&=&\left(u^2\right)_z
\end{array}\right., \quad (x,z)\in \Omega=\Ox\times(0,\infty),\\
\label{model psi conserved bc}
&&\psi|_{\p\Omega\backslash\Gamma}=0,\quad \psi_z|_\Gamma=0, \\
&& \psi|_{t=0}=\psi_0(\Bx,z),\quad u|_{t=0}=u_0(\Bx,z)\ge0 ,\nonumber
\end{eqnarray}
where $\Ox=(0,a)\times(0,a)$, and
$\Gamma=\left\{(\Bx,z)\in \Omega \;|\;\Bx \in \Ox, \; z=0\right\}$.

\begin{theorem}
\label{theorem infty conserved}
Assume that $u_0\in H^2(\Omega)$, $u_0|_{\p \Omega}=0$ , $u_0|_{\Omega}>0$,
$\psi_0\in H^3(\Omega)$ and satisfies \myref{model psi conserved bc}.
Let
\begin{eqnarray}
\label{phi infty conserved}
\phi(\Bx,z)=e^{-\al z}\sin \frac{\pi x_1}{a}\sin \frac{\pi x_2}{a},\quad (x,z)\in \Omega,
\end{eqnarray}
with $\al = \frac{\pi}{a}$,
and
\begin{eqnarray*}
&&A=\int_\Omega (\log u_0)\phi \mathd {\Bx}\mathd z,\quad B=2\int_\Omega \psi_{0z}\phi \mathd {\Bx}\mathd z,\\
&&r(t)=\frac{4\left (\frac{\pi}{a}\right )^2}{2\left (\frac{\pi}{a}\right)^2-\al^2}\int_{\Ox}
(\psi-\psi_{0})|_{z=0}\sin \frac{\pi x_1}{a}\sin \frac{ \pi x_2}{a} \mathd {\Bx}.
\end{eqnarray*}
If $A>0,\;B>0$ and $r(t)\le \frac{B}{2}$ as long as $u,\psi$ remain regular,
then the solution of \myref{model psi conserved}-\myref{model psi conserved bc}
 will develop a finite time
singularity in the $H^2$ norm.
\end{theorem}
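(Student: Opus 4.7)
I plan to argue by contradiction, following the blueprint of Theorem \ref{theorem infty mix}. Assume that \myref{model psi conserved}--\myref{model psi conserved bc} admits a globally smooth solution $u,\psi_z\in C^1([0,\infty);H^2(\Omega))$; under the theorem's standing hypothesis, $r(t)\le B/2$ then holds for every $t\ge 0$. As before, I would test the $\psi$-equation against $\phi_z$ over $\Omega$, integrate by parts in all three variables, use $u|_{\p\Omega}=0$ and the $z$-decay of $\phi$ to kill the $u^2$-boundary terms, and arrive at the same intermediate identity
\begin{equation*}
-\int_\Omega \psi_{zt}\Delta\phi\, \mathd {\Bx}\mathd z
- \int_{\Ox} \psi_{zt}\phi_z|_{z=0}\, \mathd {\Bx}
- \int_{\Ox} \psi_t \Dx\phi|_{z=0}\, \mathd {\Bx}
= \int_\Omega u^2 \phi_{zz}\, \mathd {\Bx}\mathd z
\end{equation*}
used in Theorem \ref{theorem infty mix}.

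The critical new feature comes from the traces on $\Gamma=\{z=0\}$ under the conservative condition $\psi_z|_\Gamma=0$: the middle boundary term vanishes automatically since $\psi_{zt}|_\Gamma=0$, but the last term $\int_{\Ox}\psi_t\,\Dx\phi|_{z=0}\mathd{\Bx}$ cannot be algebraically annihilated---no free parameter $\beta$ is available here to tune against $\alpha$. Substituting $\phi(\Bx,z)=e^{-\alpha z}\phi_1(\Bx)$ with $\alpha=\pi/a$ (so that $2(\pi/a)^2-\alpha^2=(\pi/a)^2>0$), the identity collapses to
\begin{equation*}
\Bigl(\tfrac{2\pi^2}{a^2}-\alpha^2\Bigr)\frac{\mathd}{\mathd t}\int_\Omega \psi_z\phi\, \mathd {\Bx}\mathd z
+ \tfrac{2\pi^2}{a^2}\frac{\mathd}{\mathd t}\int_{\Ox}\psi|_{z=0}\phi_1\, \mathd {\Bx}
= \alpha^2 \int_\Omega u^2\phi\, \mathd {\Bx}\mathd z ,
\end{equation*}
where $\phi_1(\Bx)=\sin(\pi x_1/a)\sin(\pi x_2/a)$. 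Integrating in time and inserting the definition of $r(t)$ yields
\begin{equation*}
\int_\Omega \psi_z\phi\, \mathd {\Bx}\mathd z
= \frac{B}{2} - \frac{r(t)}{2} + \frac{\alpha^2}{2(\pi/a)^2-\alpha^2}\int_0^t\!\int_\Omega u^2\phi\, \mathd {\Bx}\mathd z\,\mathd s .
\end{equation*}

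I would then plug this into the renormalized identity $\frac{\mathd}{\mathd t}\int(\log u)\phi\,\mathd{\Bx}\mathd z=2\int\psi_z\phi\,\mathd{\Bx}\mathd z$ and invoke $r(t)\le B/2$ to obtain
\begin{equation*}
\int_\Omega (\log u)\phi\, \mathd {\Bx}\mathd z \;\ge\; A + \tfrac{B}{2}t
+ \frac{2\alpha^2}{2(\pi/a)^2-\alpha^2}\int_0^t\!\int_0^s\!\int_\Omega u^2\phi\,\mathd {\Bx}\mathd z\,\mathd\tau\,\mathd s ,
\end{equation*}
which has the same structural form as \myref{est-logu-1}. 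Pairing it with the Cauchy--Schwarz bound \myref{est-logu-2} and defining $F(t)$ as the appropriately scaled double time integral of $\int u^2\phi$ plus a linear term of slope $\sqrt{\alpha}\pi B/(4a)$, I would replay verbatim the ODE argument from Theorem \ref{theorem infty mix}---establish $F_{tt}\ge DF^2$ with $D=\pi\alpha^{5/2}/[a(2(\pi/a)^2-\alpha^2)]$, multiply by $F_t>0$ and integrate to get $F_t\ge\sqrt{\tfrac{2D}{3}F^3+C}$, then use the finiteness of $I_\infty=\int_0^\infty(y^3+1)^{-1/2}\mathd y$---to force a finite blow-up time, contradicting the assumed global smoothness.

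The main obstacle is the persistent boundary term $\frac{\mathd}{\mathd t}\int_{\Ox}\psi|_{z=0}\phi_1\mathd{\Bx}$: the algebraic trick $\alpha\beta=2\pi^2/a^2$ that eliminated it in the mixed-boundary theorems is unavailable once $\beta=0$, and the nonlocal coupling between $\psi$ and $u$ does not obviously yield an unconditional bound on $r(t)$. The theorem sidesteps this by simply postulating $r(t)\le B/2$ throughout the interval of regularity; under the contradiction hypothesis the bound is then in force for all $t\ge 0$, and the ODE blow-up occurs strictly within that interval, so the contradiction is self-consistent.
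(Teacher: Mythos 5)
Your proposal is correct and follows essentially the same route as the paper's own proof: the same test-function identity, the same observation that $\psi_{zt}|_\Gamma=0$ kills one trace term while the remaining one is absorbed into $r(t)$ and controlled by the hypothesis $r(t)\le B/2$, and the same reduction to the ODE inequality $F_{tt}\ge DF^2$ from Theorem \ref{theorem infty mix}. The only cosmetic difference is that you integrate the $\int_\Omega\psi_z\phi$ identity once before substituting into the $\log u$ equation, whereas the paper forms the second-order-in-time identity first and then integrates twice; these are equivalent.
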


\begin{proof}
First, by using an argument similar to the local well-posedness
result in Theorem \ref{local wellposedness}, we can prove that the
system \myref{model psi conserved}-\myref{model psi conserved bc}
is locally well-posed.
We prove the theorem by contradiction. Assume that the initial boundary
value problem has a globally smooth solution with
$u\in C^1([0,\infty);H^2(\Omega))$ and
$\psi\in C^1([0,\infty);H^3(\Omega))$.
Multiplying $\phi_z$ to the both sides of the $\psi$-equation and
integrating over $\Omega$, we get
\begin{eqnarray}
-\int_\Omega \Delta \psi_t \phi_z \mathd {\Bx}\mathd z=\int_\Omega \left(u^2\right)_{z}\phi_z \mathd {\Bx}\mathd z .
\end{eqnarray}
Note that $u|_{z=0} = 0$ since $u_0|_{z=0}=0$.
By integrating by parts and using the boundary condition of $\psi$ and the property of
$\phi$, we have
\begin{eqnarray}
-\int_\Omega \psi_{zt} \Delta \phi \mathd {\Bx}\mathd z-\int_{\Ox} \psi_{zt}\phi_z|_{z=0}\mathd {\Bx}\mathd z-\int_{\Ox} \psi_t \Dx \phi|_{z=0}\mathd {\Bx}\mathd z =\int_\Omega u^2 \phi_{zz}\mathd {\Bx}\mathd z .
\end{eqnarray}
Substituting $\phi$ defined in \myref{phi infty conserved} into the above equation, we have
\begin{eqnarray}
\left(2\left (\frac{\pi}{a}\right)^2-\al^2\right)\frac{\mathd}{\mathd t}\int_\Omega \psi_z \phi \mathd {\Bx}\mathd z = \al^2\int_\Omega u^2 \phi \mathd {\Bx}\mathd z-
2 \left (\frac{\pi}{a}\right)^2 \int_{\Ox}\psi_{t}|_{z=0}\phi_1(\Bx) \mathd {\Bx},
\end{eqnarray}
where $\phi_1(\Bx) =\sin \frac{\pi x_1}{a}\sin \frac{\pi x_2}{a}$.
Finally we have
\begin{eqnarray}
\label{eq psi conserved infty} \frac{\mathd}{\mathd t}\int_\Omega \psi_z \phi
\mathd {\Bx}\mathd z = \frac{\al^2}{2\left (\frac{\pi}{a}\right)^2-\al^2}\int_\Omega u^2 \phi
\mathd {\Bx}\mathd z-\frac{2 \left (\frac{\pi}{a}\right)^2}{2\left (\frac{\pi}{a}\right)^2-\al^2}
\int_{\Ox}\psi_{t}|_{z=0}\phi_1(\Bx)\mathd {\Bx} .
\end{eqnarray}
Next, we multiply $\phi$ to \myref{model-logu} and integrate
over $\Omega$. We get
\begin{eqnarray}
\label{eq u conserved infty} \frac{\mathd}{\mathd t}\int_\Omega (\log u) \phi
\mathd {\Bx}\mathd z=2\int_\Omega \psi_z\phi \mathd {\Bx}\mathd z .
\end{eqnarray}
Combining \myref{eq psi conserved infty} with \myref{eq u conserved
infty}, we obtain
\begin{eqnarray}
\frac{\mathd^2}{\mathd t^2}\int_\Omega (\log u) \phi \mathd {\Bx}\mathd z=
\frac{2\al^2}{2\left (\frac{\pi}{a}\right)^2-\al^2}\int_\Omega u^2
\phi \mathd {\Bx}\mathd z-\frac{4\left (\frac{\pi}{a}\right)^2}{2\left (\frac{\pi}{a}\right)^2-\al^2}
\int_{\Ox}\psi_{t}|_{z=0}\phi_1(\Bx) \mathd {\Bx}
\end{eqnarray}
Integrating the above equation in time and using the assumption that $r(t)\le
\frac{B}{2}$, we get
\begin{eqnarray}
\label{final ineq conserved}
\int_\Omega (\log u) \phi \mathd {\Bx}\mathd z &=& \frac{2\al^2}{2\left (\frac{\pi}{a}\right)^2-\al^2}
\int_0^t\int_0^s\left(\int_\Omega u^2 \phi \mathd {\Bx}\mathd z\right)d\tau ds+A+Bt\nonumber\\
&&-\frac{4\left (\frac{\pi}{a}\right)^2}{2\left (\frac{\pi}{a}\right)^2-\al^2}
\int_0^t\left(\int_{\Ox}(\psi-\psi_{0})|_{z=0}\phi_1(\Bx) \mathd {\Bx}\right) ds\nonumber\\
&\ge&\frac{2\al^2}{2\left (\frac{\pi}{a}\right)^2-\al^2}
\int_0^t\int_0^s\left(\int_\Omega u^2 \phi \mathd {\Bx}\mathd z\right)d\tau ds+A+\frac{1}{2}Bt .
\end{eqnarray}
Using \myref{final ineq conserved}, following the same argument as in
the proof of Theorem \ref{theorem infty mix}, we can prove that the solution of
the initial boundary value problem of the 3D model blows up in a finite time.
\end{proof}

\subsection{Blow-up in a bounded domain}
\label{Neumann-bounded}

In this subsection, we will prove the finite time blow-up of the 3D model with
a conservative boundary condition in a bounded domain.
Specifically, we consider the following initial boundary value problem:
\begin{eqnarray}
\label{model psi bounded conserved neumann}
&&\left\{\begin{array}{rcl}
u_t &=&2u\psi_z\\
-\Delta \psi_t&=&\left(u^2\right)_z
\end{array}\right., \quad (\Bx,z)\in \Omega=\Ox\times(0,b),\\
\label{model psi bounded conserved neumann bc}
&&\psi|_{\p\Omega \backslash\Gamma}=0,\quad \psi_z|_{\Gamma}=0,
\quad
\\
&& \psi|_{t=0}=\psi_0(\Bx,z),\quad u|_{t=0}=u_0(\Bx,z)\ge0,\nonumber
\end{eqnarray}
where $\Bx=(x_1,x_2),\;\Ox=(0,a)\times(0,a),\;
\Gamma=\left\{(\Bx,z)\in \Omega \; |\; \Bx \in \Ox, \;z=0 \;\mbox{or} \;z=b \right\}$.

The main result is stated in the following theorem.
\begin{theorem}
\label{theorem bounded conserved neumann}
Assume that $u_0\in H^2(\Omega)$,
$u_0|_{\p \Omega}=0$ , $u_0|_{\Omega}>0$,
$\psi_0\in H^3(\Omega)$ and satisfies
\myref{model psi bounded conserved neumann bc}.
Let
\begin{eqnarray}
\label{phi conserved bounded neumann}
\phi(\Bx,z)=\frac{e^{-\al(z-b)}-e^{\al(z-b)}}{2}\sin \frac{\pi x_1}{a}
\sin \frac{ \pi x_2}{a},\quad (\Bx,z)\in \Omega,
\end{eqnarray}
with $\al = \frac{\pi}{a}$, and
\begin{eqnarray*}
&&A=\int_\Omega (\log u_0)\phi \mathd {\Bx}\mathd z,\quad B=2\int_\Omega \psi_{0z}\phi \mathd {\Bx}\mathd z,\\
&&r(t)=\frac{2\left(\frac{\pi}{a}\right)^2(e^{\al b}-e^{-\al b})}
{2\left (\frac{\pi}{a}\right)^2-\al^2}
\int_{\Ox} (\psi-\psi_{0})|_{z=0} \sin \frac{\pi x_1}{a}
\sin \frac{ \pi x_2}{a} \mathd {\Bx}\le \frac{B}{2}.
\end{eqnarray*}
If $A>0,\;B>0$ and $r(t)\le \frac{B}{2}$ as long as $u,\psi$ remain regular,
then the solution of \myref{model psi bounded conserved neumann}-\myref{model psi bounded conserved neumann bc}
will develop a finite time singularity in the $H^2$ norm.
\end{theorem}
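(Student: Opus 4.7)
The plan is to follow the same contradiction scheme as in Theorem~\ref{theorem infty conserved}, but with the weight function $\phi$ adapted to the Neumann boundary condition at $z=b$. First I would verify the two eigen-properties of $\phi$ from \myref{phi conserved bounded neumann}: a direct calculation gives $-\Delta\phi=\lambda_1\phi$ with $\lambda_1=2(\pi/a)^2-\al^2=(\pi/a)^2$, and $\phi_{zz}=\al^2\phi$ with $\al=\pi/a$. The crucial geometric feature is that $\phi|_{z=b}=0$ while $\phi(\Bx,0)=\sinh(\al b)\phi_1(\Bx)$ with $\phi_1(\Bx)=\sin(\pi x_1/a)\sin(\pi x_2/a)$, and $\phi$ vanishes on the lateral boundary. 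Assuming for contradiction that a global smooth solution exists, I would propagate $u|_\Gamma = 0$ from the $u$-equation and work with the renormalized equation $(\log u)_t = 2\psi_z$.

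Next I would multiply the $\psi$-equation by $\phi_z$ and integrate over $\Omega$. For the right-hand side, one IBP in $z$ combined with $u|_\Gamma=0$ yields $-\al^2\int_\Omega u^2\phi\,\dx dz$. For the left-hand side, I would use Green's identity to transfer the Laplacian onto $\phi_z$. The lateral boundary contributions vanish because $\psi|_{\partial\Omega\setminus\Gamma}=0$ and $\phi_z\propto \sin(\pi x_i/a)$. The $z=b$ contributions also vanish, using $\psi_z|_{z=b}=0$ together with $\phi|_{z=b}=\phi_{zz}|_{z=b}=0$. Only the $z=0$ boundary survives, producing a term proportional to $\int_{\Ox}\psi_t|_{z=0}\phi_1\,\dx$ with coefficient $2(\pi/a)^2\sinh(\al b)/\lambda_1$. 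Using $-\Delta\phi_z=\lambda_1\phi_z$ inside $\Omega$ and a further IBP in $z$ (whose boundary term recombines with the one above using $\phi|_{z=b}=0$), I expect to arrive at
\begin{equation*}
\frac{\mathd}{\mathd t}\int_\Omega \psi_z\phi\,\dx dz
= \frac{\al^2}{2(\pi/a)^2-\al^2}\int_\Omega u^2\phi\,\dx dz
-\frac{2(\pi/a)^2\sinh(\al b)}{2(\pi/a)^2-\al^2}\,\frac{\mathd}{\mathd t}\int_{\Ox}\psi|_{z=0}\phi_1\,\dx .
\end{equation*}

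Testing the renormalized equation $(\log u)_t=2\psi_z$ against $\phi$ gives $\frac{d}{dt}\int_\Omega(\log u)\phi\,\dx dz = 2\int_\Omega\psi_z\phi\,\dx dz$. Combining with the previous identity, integrating twice in time, and collecting the boundary evolution into $r(t)$ as defined in the statement (note that $4(\pi/a)^2\sinh(\al b)/\lambda_1$ matches exactly the coefficient prescribed in the theorem), I obtain
\begin{equation*}
\int_\Omega(\log u)\phi\,\dx dz
= A+Bt+\frac{2\al^2}{2(\pi/a)^2-\al^2}\int_0^t\!\!\int_0^s\!\!\int_\Omega u^2\phi\,\dx dz\,d\tau\,ds
-\int_0^t r(s)\,ds .
\end{equation*}
The hypothesis $r(t)\le B/2$ then allows me to absorb the boundary contribution into half of the $Bt$ term, yielding
\begin{equation*}
\int_\Omega(\log u)\phi\,\dx dz\ge A+\tfrac{1}{2}Bt
+\frac{2\al^2}{2(\pi/a)^2-\al^2}\int_0^t\!\!\int_0^s\!\!\int_\Omega u^2\phi\,\dx dz\,d\tau\,ds .
\end{equation*}

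In the final step, I would bound $\int_\Omega(\log u)\phi\le \int_\Omega(\log u)^+\phi\le \int_\Omega u\phi\le(\int_\Omega\phi)^{1/2}(\int_\Omega u^2\phi)^{1/2}$ using $u>0$ and Cauchy--Schwarz, exactly as in \myref{est-logu-2}. Setting $F(t)$ equal to the double time integral of $\int_\Omega u^2\phi$ plus the linear term $\frac{\sqrt{\al}\pi}{2a}\cdot \frac{B}{2}\,t$, I would obtain $F''\ge DF^2$ with $F(0)=0$ and $F'(0)>0$, then multiply by $F'$ and integrate to derive a Riccati-type inequality $F'\ge\sqrt{\frac{2D}{3}F^3+C}$. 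Since $\int_0^\infty dx/\sqrt{x^3+1}=I_\infty<\infty$, $F$ must blow up in finite time, contradicting the global smoothness assumption and giving the desired finite time singularity of the $H^2$ norm. The main technical obstacle is the careful bookkeeping of the four boundary pieces arising from the double IBP: matching coefficients so that the residual boundary term lumps into a single total time derivative of $\int_{\Ox}\psi|_{z=0}\phi_1\,\dx$ — this is where the specific choice $\al=\pi/a$ (making $\phi_{zz}|_{z=b}=\al^2\phi|_{z=b}=0$) and the Neumann condition $\psi_z|_{z=b}=0$ both become essential.
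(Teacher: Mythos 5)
Your proposal is correct and follows essentially the same route as the paper: the same weight $\phi$ with $-\Delta\phi=\lambda_1\phi$, $\phi_{zz}=\al^2\phi$, $\phi|_{z=b}=0$, the same double integration by parts leaving only the $z=0$ boundary term $\frac{2(\pi/a)^2(e^{\al b}-e^{-\al b})}{2(\pi/a)^2-\al^2}\int_{\Ox}\psi_t|_{z=0}\phi_1\,d\Bx$, absorption of that term via the hypothesis $r(t)\le B/2$ into $\frac{1}{2}Bt$, and the same Riccati-type blowup argument inherited from Theorem \ref{theorem infty mix}. The only cosmetic difference is that the Cauchy--Schwarz constant $(\int_\Omega\phi)^{1/2}$ differs from the semi-infinite value $\frac{2a}{\pi\sqrt{\al}}$ in this bounded domain, which only affects the (unstated) constant in $T^*$ and not the conclusion.
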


\begin{proof}
Again, the local well-posedness of \myref{model psi bounded conserved neumann}-\myref{model psi bounded conserved neumann bc} can be established by using
an argument similar to the proof of Theorem \ref{local wellposedness}.
We prove the theorem by contradiction. Assume that the initial boundary
value problem has a globally smooth solution with
$u\in C^1([0,\infty);H^2(\Omega))$
and $\psi \in C^1([0,\infty);H^3(\Omega))$.
Multiplying $\phi_z$ to the both sides of the $\psi$-equation and
integrating over $\Omega$, we have
\begin{eqnarray}
-\int_\Omega \Delta \psi_t \phi_z \mathd {\Bx}\mathd z=\int_\Omega \left(u^2\right)_{z}\phi_z \mathd {\Bx}\mathd z .
\end{eqnarray}
Note that $u|_\Gamma = 0$ as long as the solution remains regular.
By integrating by parts and using the boundary condition of $\psi$ and the property
of $\phi$, we get
\begin{eqnarray}
-\int_\Omega \psi_{zt} \Delta \phi \mathd {\Bx}\mathd z+
\int_{\Ox} \psi_t \Dx \phi|_{z=0}^{z=b}\mathd {\Bx}\mathd z
=\int_\Omega u^2 \phi_{zz}\mathd {\Bx}\mathd z .
\end{eqnarray}
Substituting  $\phi$ to the above equation, we obtain
\begin{eqnarray}
\left(2\left (\frac{\pi}{a}\right)^2-\al^2\right)
\frac{\mathd}{\mathd t}\int_\Omega \psi_z \phi \mathd {\Bx}\mathd z& = &
\al^2 \int_\Omega u^2 \phi \mathd {\Bx}\mathd z \nonumber \\
&&-
\left (\frac{\pi}{a}\right )^2(e^{\al b}-e^{-\al b})
\int_{\Ox}\psi_{t}|_{z=0}\phi_1(\Bx) \mathd {\Bx} ,
\end{eqnarray}
where $\phi_1(\Bx) =\sin \frac{\pi x_1}{a}\sin \frac{\pi x_2}{a}$.
Thus we have
\begin{eqnarray}
\label{eq psi conserved bounded neumann}
\frac{\mathd}{\mathd t}\int_\Omega \psi_z \phi
\mathd {\Bx}\mathd z = \frac{\al^2}{2\left (\frac{\pi}{a}\right)^2-\al^2}
\int_\Omega u^2 \phi \mathd {\Bx}\mathd z-
\frac{\left (\frac{\pi}{a}\right )^2(e^{\al b}-e^{-\al b})}
{2\left (\frac{\pi}{a}\right)^2-\al^2}
\int_{\Ox}\psi_{t}|_{z=0}\phi_1(\Bx) \mathd {\Bx} .
\end{eqnarray}
Next, we multiply $\phi$ to \myref{model-logu} and integrate
over $\Omega$. We have
\begin{eqnarray}
\label{eq u conserved bounded neumann} \frac{\mathd}{\mathd t}\int_\Omega
(\log u) \phi
\mathd {\Bx}\mathd z=2\int_\Omega \psi_z\phi \mathd {\Bx}\mathd z .
\end{eqnarray}
Combining \myref{eq psi conserved bounded neumann} with \myref{eq u conserved
bounded neumann}, we obtain
\begin{eqnarray}
\frac{\mathd^2}{\mathd t^2}\int_\Omega (\log u) \phi \mathd {\Bx}\mathd z=
\frac{2\al^2}{2\left (\frac{\pi}{a}\right)^2-\al^2}
\int_\Omega u^2 \phi \mathd {\Bx}\mathd z-
\frac{2 \left (\frac{\pi}{a}\right )^2(e^{\al b}-e^{-\al b})}
{2\left (\frac{\pi}{a}\right)^2-\al^2}\int_{\Ox}\psi_{t}|_{z=0}\phi_1(\Bx) \mathd {\Bx}.
\quad
\end{eqnarray}
Integrating the above equation in time and using the assumption that $r(t)\le
\frac{B}{2}$, we get
\begin{eqnarray}
\label{final ineq conserved neumann bounded}
\int_\Omega (\log u) \phi \mathd {\Bx}\mathd z &=&
\frac{2\al^2}{2\left (\frac{\pi}{a}\right)^2-\al^2}
\int_0^t\int_0^s\left(\int_\Omega u^2 \phi \mathd {\Bx}\mathd z\right)d\tau ds+A+Bt\nonumber\\
&&-\frac{2 \left (\frac{\pi}{a}\right )^2(e^{\al b}-e^{-\al b})}
{2\left (\frac{\pi}{a}\right)^2-\al^2}
\int_0^t\left(\int_{\Ox}\left(\psi-\psi_0\right)|_{z=0}\phi_1(\Bx)
\mathd {\Bx}\right) ds\nonumber\\
&\ge&\frac{2\al^2}{2\left (\frac{\pi}{a}\right)^2-\al^2}
\int_0^t\int_0^s\left(\int_\Omega u^2 \phi \mathd {\Bx}\mathd z\right)d\tau ds+A+\frac{1}{2}Bt .
\end{eqnarray}
Using \myref{final ineq conserved neumann bounded} and following the same argument as in
the proof of Theorem \ref{theorem infty mix}, we can prove that the solution  of
the 3D model will develop a finite time singularity in the $H^2$ norm.
\end{proof}

\subsection{Blow-up of the 3D model with other conservative boundary conditions}

The singularity analysis we present in the previous subsection can
be generalized to study the finite time blow-up of the 3D model with
the same boundary condition along the $x_1$ and $x_2$ directions as
in Section \ref{Neumann-bounded}, but
changing the Neumann boundary condition along the $z$-direction to
a periodic boundary condition. The assumption on $u_0$ and $\psi_0$
remains the same as in Section \ref{Neumann-bounded}.
In this case, we can prove the finite time
blow-up of the corresponding initial boundary value problem with
two minor modifications in the statement of the blow-up theorem.
The first change is to replace $\phi$ by the following definition:
\begin{eqnarray}
\label{phi periodic bounded }
\phi(\Bx,z)=\frac{e^{-\al z}+e^{-\al(z-b)}}{2}\sin \frac{\pi x_1}{a}
\sin \frac{ \pi x_2}{a},\quad (\Bx,z)\in \Omega =(0,a)\times(0,a)\times (0,b),
\end{eqnarray}
with $\al = \frac{\pi}{a}$. The second change is to modify the definition of
$r(t)$ as follows:
\begin{eqnarray*}
r(t)=\frac{2\al (1-e^{-\al b})}
{2\left (\frac{\pi}{a}\right)^2-\al^2}
\int_{\Ox} (\psi_z-\psi_{0z})|_{z=0} \sin \frac{\pi x_1}{a}
\sin \frac{ \pi x_2}{a} \mathd {\Bx}\le \frac{B}{2},
\end{eqnarray*}
where $A$ and $B$ are the same as in Theorem \ref{theorem bounded conserved neumann}.
If $A>0,\;B>0$ and $r(t)\le \frac{B}{2}$ as long as $u,\psi$ remain regular,
then we can prove that the solution of the corresponding initial boundary value
problem will develop a finite time singularity in the $H^2$ norm.

The same singularity analysis can be applied to
study the finite time blow-up of the 3D model with
the same boundary condition along the $x_1$ and $x_2$ directions as
in Section \ref{Neumann-bounded}, but
changing the Neumann boundary condition along the $z$-direction to
the Dirichlet boundary condition. The assumption on $u_0$ and $\psi_0$
remains the same as in Section \ref{Neumann-bounded}.
In this case, we can prove the finite time
blow-up of the corresponding initial boundary value problem with
two minor modifications in the statement of the blow-up theorem.
The first change is to replace $\phi$ by the following definition:
\begin{eqnarray}
\label{phi periodic bounded-2 }
\phi(\Bx,z)=\frac{e^{\al (z-b)}+e^{-\al(z-b)}}{2}\sin \frac{\pi x_1}{a}
\sin \frac{ \pi x_2}{a},\quad (\Bx,z)\in \Omega =(0,a)\times(0,a)\times (0,b),
\end{eqnarray}
with $\al = \frac{\pi}{a}$. The second change is to modify the definition of
$r(t)$ as follows:
\begin{eqnarray*}
r(t)=\frac{\al (e^{\al b}-e^{-\al b})}
{2\left (\frac{\pi}{a}\right)^2-\al^2}
\int_{\Ox} (\psi_z-\psi_{0z})|_{z=0} \sin \frac{\pi x_1}{a}
\sin \frac{ \pi x_2}{a} \mathd {\Bx}\le \frac{B}{2},
\end{eqnarray*}
where $A$ and $B$ are the same as in Theorem \ref{theorem bounded conserved neumann}.
If $A>0,\;B>0$ and $r(t)\le \frac{B}{2}$ as long as $u,\psi$ remain regular,
then we can prove that the solution of the corresponding initial boundary value
problem will develop a finite time singularity in the $H^2$ norm.

\begin{remark}
All the results in this section can be generalized to a cylindrical domain $\Omega$ in
high dimension space $\mathbb{R}^N$, with
$\Omega=\left\{(\mathbf{x},z)|\; \mathbf{x}\in \Omega_\mathbf{x}\subset \mathbb{R}^{N-1}, z\in [a,b]\subset \mathbb{R} \right\}$. In this case, the weight
function $\phi(\mathbf{x},z)$ is chosen to be the product of two functions:
\begin{eqnarray}
\phi(\mathbf{x},z)=\phi_1(\mathbf{x})\eta(z).
\end{eqnarray}
Here the eigen-function, $\eta(z)$, is the same in the Eulerian coordinate in
the previous sections. The eigen-function, $\phi_1(\Bx)$, defined in the
$\Bx$ space, is chosen to be the first eigen-function of the
following eigenvalue problem:
\begin{eqnarray}
-\Delta_\mathbf{x}  \phi_1&=&\lambda \phi_1,\\
\phi_1|_{\p \Omega_{\mathbf{x}}}&=&0,
\end{eqnarray}
with $\lambda >0$, where $\Delta_\mathbf{x}$ is the $N-1$ dimensional Laplace
operator,
$\Delta_\mathbf{x}=\frac{\p^2}{\p x_1^2}+\cdots+\frac{\p^2}{\p x_{N-1}^2}$.
\end{remark}

\section{Global regularity of the 3D inviscid model with small data }

In this section, we will prove the global regularity of the 3D
inviscid  model for a class of small initial data with some
appropriate boundary condition. We remark that since we consider
the inviscid version of the 3D model, there is no viscosity in
the model equation. Although we impose some smallness condition
on the initial data, such result is still very interesting
since there is currently no global regularity result for
the 3D incompressible Euler equations even for small initial data.

To simplify
the presentation of our analysis, we use $u^2$ and $\psi_z$ as our
new variables. We will define $v=\psi_z$ and still use $u$ to stand
for $u^2$.  Then the 3D model now has the form:
\begin{eqnarray}
\label{model regular}
&&\left\{\begin{array}{rcl}
u_t &=&4uv\\
-\Delta v_t&=&u_{zz}
\end{array}\right., \quad (\Bx,z)\in
\Omega=(0,\delta)\times(0,\delta)\times(0,\delta).
\end{eqnarray}
We choose the following boundary condition for $v$:
\begin{eqnarray}
\label{Global-BC}
v|_{\p \Omega}=-4,
\end{eqnarray}
and denote $ v|_{t=0}=v_0(\Bx,z)$ and $u|_{t=0}=u_0(\Bx,z)\ge0$.

In our regularity analysis, we need to use the following Sobolev
inequality \cite{Foland95}:

\begin{lemma}
\label{lemma majda} For all $s\in \mathbb{Z}^+$, there exists
$C_s>0$, such that, for all $u,v \in L^{\infty}\cap
H^s\left(\mathbb{R}^N\right)$,
\begin{eqnarray}
\left(\sum_{0\le |\al| \le s} \|\p^\al (uv)-\p^\al u\cdot
v\|_{L^2}^2\right)^{1/2}\le
C_s\left(\|u\|_{L^{\infty}}\|v\|_{H^s}+\|\nabla
v\|_{L^\infty}\|u\|_{H^{s-1}}\right) .
\end{eqnarray}
\end{lemma}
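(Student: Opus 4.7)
The plan is to recognize the estimate as the classical Moser–Kato–Ponce commutator inequality and prove it by combining the Leibniz rule with the Gagliardo–Nirenberg interpolation inequality. The $|\al|=0$ term in the sum vanishes identically, so it suffices to fix a multi-index $\al$ with $1\le |\al|=k\le s$ and produce a bound that depends only on $k$ and the dimension $N$; then summing finitely many such bounds gives the required constant $C_s$.

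First, I would expand via the Leibniz rule
\[
\p^\al(uv)-(\p^\al u)v \;=\; \sum_{0<\gamma\le \al}\binom{\al}{\gamma}\,(\p^{\al-\gamma}u)(\p^\gamma v),
\]
so that every surviving term carries at least one derivative on $v$. The two extreme cases are immediate: when $|\gamma|=k$ the summand $u\,\p^\al v$ is bounded in $L^2$ by $\|u\|_{L^\infty}\|v\|_{H^s}$, and when $|\gamma|=1$ (hence $|\al-\gamma|=k-1\le s-1$) the summand $(\p v)(\p^{\al-1}u)$ is bounded in $L^2$ by $\|\nabla v\|_{L^\infty}\|u\|_{H^{s-1}}$. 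These already cover $k=1$ and $k=2$.

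Second, for the intermediate multi-indices $1<|\gamma|<k$ (which only appear when $k\ge 3$), I would apply H\"older's inequality with the conjugate pair
\[
p_\gamma=\frac{2(k-1)}{|\gamma|-1},\qquad q_\gamma=\frac{2(k-1)}{k-|\gamma|},\qquad \frac{1}{p_\gamma}+\frac{1}{q_\gamma}=\frac{1}{2},
\]
and then invoke Gagliardo–Nirenberg (applied to $\nabla v$ with $j=|\gamma|-1$, $m=k-1$, and to $u$ with $j=|\al-\gamma|$, $m=k-1$) to obtain
\[
\|\p^\gamma v\|_{L^{p_\gamma}}\le C\,\|\nabla v\|_{L^\infty}^{(k-|\gamma|)/(k-1)}\,\|v\|_{H^s}^{(|\gamma|-1)/(k-1)},
\]
\[
\|\p^{\al-\gamma}u\|_{L^{q_\gamma}}\le C\,\|u\|_{L^\infty}^{(|\gamma|-1)/(k-1)}\,\|u\|_{H^{s-1}}^{(k-|\gamma|)/(k-1)}.
\]
Multiplying the two estimates groups the factors as $[\|\nabla v\|_{L^\infty}\|u\|_{H^{s-1}}]^{(k-|\gamma|)/(k-1)}[\|u\|_{L^\infty}\|v\|_{H^s}]^{(|\gamma|-1)/(k-1)}$, and Young's inequality with the exponents $(k-1)/(k-|\gamma|)$ and $(k-1)/(|\gamma|-1)$ dominates this by $\|u\|_{L^\infty}\|v\|_{H^s}+\|\nabla v\|_{L^\infty}\|u\|_{H^{s-1}}$. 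Summing over $\gamma$ and then over $|\al|\le s$ and taking the $\ell^2$ norm yields the lemma.

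The main obstacle is bookkeeping rather than analysis: one must verify that the H\"older exponents $p_\gamma, q_\gamma$ are admissible for Gagliardo–Nirenberg (both strictly between $2$ and $\infty$ precisely when $1<|\gamma|<k$), and one must peel off the endpoint cases $|\gamma|\in\{1,k\}$ so that the interpolation exponents do not degenerate. With those edge cases handled directly as above, every remaining step is routine, which is why the result is quoted without proof from \cite{Foland95}.
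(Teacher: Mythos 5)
The paper does not prove this lemma at all: it is quoted as a known Sobolev/Moser-type calculus inequality with a citation to Folland, so there is no in-paper argument to compare against. Your proposal is the standard and correct proof of this commutator estimate (the Klainerman--Majda/Moser calculus argument): the Leibniz expansion puts at least one derivative on $v$ in every term, the endpoint terms $|\gamma|=1$ and $|\gamma|=k$ are bounded directly by $\|\nabla v\|_{L^\infty}\|u\|_{H^{s-1}}$ and $\|u\|_{L^\infty}\|v\|_{H^s}$ respectively, and for $1<|\gamma|<k$ your H\"older exponents satisfy $1/p_\gamma+1/q_\gamma=1/2$ and match the Gagliardo--Nirenberg interpolation $\|\p^j w\|_{L^{2m/j}}\le C\|w\|_{L^\infty}^{1-j/m}\|\p^m w\|_{L^2}^{j/m}$ applied to $\nabla v$ and to $u$ with $m=k-1$, after which the two interpolation exponents sum to one and Young's inequality closes the estimate. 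I find no gap; this is exactly the proof the cited reference would supply.
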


Now we state the main result of this section.
\begin{theorem}\label{lemma-small}
Assume that $u_0, v_0\in H^s(\Omega)$ with $s \ge 4$, $u_0|_{\p
\Omega} = 0$, $v_0|_{\p \Omega} = -4$ and
$v_0\le -4$ over $\Omega$, then the solution
of \myref{model regular}-\myref{Global-BC} remains regular in
$H^s(\Omega)$ for all time as long as the following holds
\begin{eqnarray}
\label{IC-est-global} \delta (4C_s+1)
\left(\|v_0\|_{H^s}+C_s\|u_0\|_{H^s}\right)<1,
\end{eqnarray}
where $C_s$ is an interpolation constant. Moreover, we have
$\|u\|_{L^\infty}\le
\|u_0\|_{L^\infty}e^{-7t},\;\|u\|_{H^s(\Omega)}\le
\|u_0\|_{H^s(\Omega)}e^{-7t}$ and $\|v\|_{H^s(\Omega)}\le C$ for
some constant $C$ which depends on $u_0, v_0$ and $s$ only.
\end{theorem}
\begin{proof}
First of all, we note that $v_t$ satisfies the homogeneous
boundary condition on $\partial \Omega$ since $v = -4$
on $\partial \Omega$.
Let $K = (-\Delta)^{-1} $ be the inverse Laplacian
operator with homogeneous Dirichlet boundary condition. Then, we
can rewrite (\ref{model regular}) as follows:
\begin{eqnarray}
\label{model-regular-2} &&\left\{\begin{array}{rcl}
u_t &=&4uv\\
v_t&=&K(u_{zz})
\end{array}\right., \quad (\Bx,z)\in
\Omega=(0,\delta)\times(0,\delta)\times(0,\delta).
\end{eqnarray}
Standard elliptic theory implies that $K$ is a
linear bounded operator from $H^{s-2}(\Omega)$ to
$H^s(\Omega)$, that is for any $f\in H^{s-2}(\Omega)$, we have
\begin{eqnarray}
\label{bound k-2}
  \|K(f)\|_{H^s(\Omega)}\le  C_s\|f\|_{H^{s-2}(\Omega)},
\end{eqnarray}
for $s \geq 2$. Such estimate can be also obtained directly
by using an argument similar to the proof of Lemma \ref{reisz}.

Next, we define
 $V^s=\{v\in H^s(\Omega): v|_{\partial\Omega}=-4\}$.
Since $s \geq 4$, the trace of $v$ on $\partial \Omega$ is well-defined.
Let $X:=H^s(\Omega) \times V^s(\Omega)$ be a Banach space
with the norm
$\|\cdot\|_X$ of the space $X$ defined as follows:
\begin{eqnarray*}
  \|U\|_X=\|U_1\|_{H^s(\Omega)}+\|U_2\|_{H^s(\Omega)}.
\end{eqnarray*}
Further we express the system (\ref{model-regular-2})
as an ODE in the Banach space $X$:
\begin{equation}
\label{new-system-2}
U_t = F(U),
\end{equation}
where $U = (U_1, U_2)= (u,v)$ and $F(U) = (F_1(U),
F_2(U))=\left(4uv, K(u_{zz})\right)$.

We note that $K \partial_{zz}$ is a bounded linear
operator from $H^s(\Omega)$ to $H^s(\Omega)$.
By using an argument similar to the
local well-posedness analysis presented in Section 2.1, we can
show that the system (\ref{model-regular-2}) is locally well-posed
and there exists $T_0 > 0$ such that $\|u\|_{H^s}$ and
$\|v\|_{H^s}$ are bounded for $0 \leq t \leq T_0$.
Furthermore, by using Lemma \ref{lemma hs} and the fact that
$K \partial_{zz}$ is a bounded operator from $H^s$
to $H^s$, we can easily obtain the following {\it a priori}
estimate
\[
\frac{d}{dt} \|U \|_X \leq C_s \|U\|_X^2 ,
\]
for $ 0 \leq t \leq T_0$, which implies that $\|U\|_X$ is bounded
by a constant $M$ that depends on $\|U_0\|_X$ only for
$0 \leq t \leq \overline{T}_0 < \min(T_0, 1/(C_s \|U_0\|_X))$.

On the other hand, since $K \partial_{zz}$ is a bounded operator from $H^s$
to $H^s$, we obtain by standard energy estimates that
\[
\frac{d}{dt} \|v\|_{H^s(\Omega)} \leq C_s \|u\|_{H^s(\Omega)}
\leq C_s M(\|u_0\|_{H^s(\Omega)},\|v_0\|_{H^s(\Omega)}),
\]
from which we conclude that $\|v(t)\|_{H^s(\Omega)}$ can be
made as close to $\|v_0\|_{H^s(\Omega)}$ as we wish for
$0 \leq t \leq \overline{T}_0$ by making
$\overline{T}_0$ small enough.
Similarly, since $s \ge 4$, we have by using
the Sobolev embedding theorem and the {\it a priori} estimate that
\[
\|v_t\|_{L^\infty(\Omega)} \leq C_0
\|v_t\|_{H^s(\Omega)} \leq C_0 \|K(u_{zz}) \|_{H^s(\Omega)}
\leq C_s M(\|u_0\|_{H^s(\Omega)},\|v_0\|_{H^s(\Omega)}).
\]
Thus we can also make $\|v(t)-v_0\|_{L^\infty(\Omega)}$ as small
as we wish for $0 \leq t \leq \overline{T}_0$ by making
$\overline{T}_0$ small enough.

Note that \myref{IC-est-global} implies that $2C_s \delta
\|v_{0} \|_{H^s} < \frac{1}{2}$. By our assumption, we also have
$v_0 \leq -4$ in $\Omega$. Based on the above argument, we can choose
$\overline{T}_0$
small enough so that we have $v(t) < - 2$
on $\Omega$, and $ 2C_s \delta \|v(t) \|_{H^s} < 1 $ for $ 0 \leq t <
\overline{T}_0$.

Let $\left[ 0, T \right)$ be the largest time interval on which
$\|u\|_{H^s}$ and $\|v\|_{H^s}$ are bounded, and both of the
following inequalities hold:
\[
v\le -2\;\; \mbox{over} \;\; \Omega, \quad 2C_s\delta \|v\|_{H^s}\le
1.
\]
We will show that $T=\infty$.

For $\al =(\al_1,\al_2,\al_3)$ with $\al_j \ge 0$ ($j=1,2,3$) and
$|\al| \le s$, we have for $0 \le t < T$ that
\begin{eqnarray}
\frac{\mathd}{\mathd t}\left<\p^\al u, \p^\al u\right>&=&8\left<\p^\al (uv), \p^\al u\right>\nonumber\\
&=&8\left<\p^\al u\cdot v, \p^\al u\right>+8\left<\p^\al (uv)-\p^\al u\cdot v, \p^\al u\right>\nonumber\\
&=&8\int_\Omega |\p^\al u|^2v\mathd {\Bx}\mathd z+8\left<\p^\al (uv)-\p^\al u\cdot v, \p^\al u\right>\nonumber\\
&\le&-16\int_\Omega |\p^\al u|^2\mathd {\Bx}\mathd z+8\|\p^\al
(uv)-\p^\al u\cdot v\|_{L^2}\|\p^\al u\|_{L^2} .
\end{eqnarray}
Using Lemma \ref{lemma majda}, we get
\begin{eqnarray}
\label{eq u hs}
 \frac{\mathd}{\mathd t}\|u\|_{H^s}\le
-8\|u\|_{H^s}+C_s\left(\|u\|_{L^{\infty}}\|v\|_{H^s}+\|\nabla
v\|_{L^\infty}\|u\|_{H^{s-1}}\right) .
\end{eqnarray}
Since $u|_{\p \Omega} = u_0|_{\p \Omega} = 0$, we obtain
\begin{eqnarray}
\label{u l infty}
u(\Bx,z,t)&=&\int_0^z\p_{z'}u(\Bx,z',t)\mathd z'\nonumber\\
&=&\int_0^z\int_0^{x_1}\int_0^{x_2}\p_{x_1'}\p_{x_2'}\p_{z'}u(x_1',x_2',z',t)
\mathd {x_1}' \mathd {x_2}'
\mathd z'\nonumber\\
&\le& \delta^{3/2} \| \p_{x_1}\p_{x_2}\p_z u\|_{L^2} \le \delta
\|u\|_{H^s},\quad
\end{eqnarray}
since $s \ge 4$. Notice that $v_{x_i}|_{z=0}=0$, so we have
\begin{eqnarray}
\label{vx-inf} v_{x_i}=\int_0^z v_{x_iz'}\mathd z'\le \int_0^\delta
|v_{x_iz'}|\mathd z'\le \delta \|v_{x_iz}\|_{L^{\infty}} .
\end{eqnarray}
Similarly, since $v_z|_{x_1=0} = 0$, we have
\begin{eqnarray}
\label{vz-inf} v_z=\int_0^{x_1} v_{x_1'z}\mathd {x_1}'\le
\int_0^\delta |v_{x_1'z}|\mathd {x_1}'\le \delta
\|v_{x_1z}\|_{L^{\infty}} .
\end{eqnarray}
Combining \myref{vx-inf} with \myref{vz-inf}, we get
\begin{eqnarray}
\label{dv-inf} \|\nabla v\|_{L^\infty}\le \delta \max_{i=1,2}
(\|v_{x_iz}\|_{L^{\infty}}) .
\end{eqnarray}
Since $s \ge 4 > 2+ 3/2$ by our assumption, we obtain by using the
Sobolev embedding theorem \cite{Foland95} that
\begin{eqnarray}
\label{vxiz-inf} \|v_{x_iz}\|_{L^\infty}\le C_s
\|v_{x_iz}\|_{H^{s-2}} \le C_s \|v\|_{H^s}.
\end{eqnarray}
It follows from \myref{dv-inf} and \myref{vxiz-inf} that
\begin{eqnarray}
\label{v l infty} \|\nabla v\|_{L^\infty}\le C_s\delta \|v\|_{H^{s}}
.
\end{eqnarray}
Combine \myref{eq u hs}-\myref{u l infty} with \myref{v l infty}, we
obtain
\begin{eqnarray}
\frac{\mathd}{\mathd t}\|u\|_{H^s}\le
\left(-8+2C_s\delta\|v\|_{H^{s}}\right)\|u\|_{H^{s}}.
\end{eqnarray}
Since $2C_s\delta\|v\|_{H^{s}} \le 1$ for $ t < T$ by the assumption
of $T$, we have for $ t < T$ that
\begin{eqnarray}
\label{u hs} \|u\|_{H^s}\le \|u_0\|_{H^s}e^{-7t}.
\end{eqnarray}
Note that
\begin{eqnarray}
\frac{\mathd}{\mathd t}\left<\p^\al v, \p^\al v\right>=2\left<\p^\al
v_t, \p^\al v\right>\le 2\|\p^\al v_t\|_{L^2}\|\p^\al v\|_{L^2} .
\end{eqnarray}
Recall that $\Delta v_t = u_{zz}$. We can easily generalize the
proof of Lemma \ref{reisz} to show that
\begin{equation}
\label{vt-reisz} \|v_t \|_{H^s(\Omega} \le C_s \|u_{zz}\|_{H^{s-2}
(\Omega)} \le C_s \|u\|_{H^s(\Omega)}.
\end{equation}
Using \myref{vt-reisz}, we get
\begin{eqnarray}
\frac{\mathd}{\mathd t}\|v\|_{H^s}^2\le 2\|v_t\|_{H^s}\|v\|_{H^s}\le
2C_s \|u\|_{H^s}\|v\|_{H^s}.
\end{eqnarray}
Substituting \myref{u hs} to the above equations, we get for $t < T$
that
\begin{eqnarray}
\frac{\mathd}{\mathd t}\|v\|_{H^s}\le C_s \|u\|_{H^s}\le
C_s\|u_0\|_{H^s}e^{-7t} .
\end{eqnarray}
Integrating the above inequality in time, we obtain the estimate of
$\|v\|_{H^s}$ over $[0,T)$:
\begin{eqnarray}
\|v\|_{H^s}&\le & \|v_0\|_{H^s}+C_s\|u_0\|_{H^s}\int_0^t e^{-7s}ds\nonumber \\
&\le& \|v_0\|_{H^s}+\frac{C_s}{7} \|u_0\|_{H^s} \le
\|v_0\|_{H^s}+C_s \|u_0\|_{H^s}  . \label{vHs}
\end{eqnarray}
Since $v|_{\p \Omega} = -4$, we can use the the same argument as in
the proof of \myref{u l infty} to show that
\begin{eqnarray}
|v+4|\le \delta \|v\|_{H^s}\le
\delta\left(\|v_0\|_{H^s}+C_s\|u_0\|_{H^s}\right),
\end{eqnarray}
where we have used \myref{vHs}. Now we have for $t < T$ that
\begin{eqnarray*}
&& v\le -4+\delta\left(\|v_0\|_{H^s}+C_s\|u_0\|_{H^s}\right),\\
\vspace{0.1in} &&2C_s\delta \|v\|_{H^s}\le
2C_s\delta\left(\|v_0\|_{H^s}+C_s\|u_0\|_{H^s}\right).
\end{eqnarray*}
By our assumption on the initial data, we have
\begin{eqnarray}
\delta (4C_s+1) \left(\|v_0\|_{H^s}+C_s\|u_0\|_{H^s}\right)<1.
\end{eqnarray}
Therefore, we have proved that if
\begin{eqnarray}
\label{interval cond} v\le-2\; \mbox{on}\; \Omega \quad
\mbox{and}\quad 2C_s\delta \|v\|_{H^s}\le 1, \quad 0\le t<T,
\end{eqnarray}
then we actually have
\begin{eqnarray}
 v\le-3\; \mbox{on}\; \Omega \quad \mbox{and}\quad 2C_s\delta \|v\|_{H^s}\le \frac{1}{2}, \quad 0\le t<T.
\end{eqnarray}
This implies that we can extend the time interval beyond $T$ so that
\myref{interval cond} is still valid. This contradicts the
assumption that $[0,T)$ is the largest time interval on which
\myref{interval cond} is valid. This contradiction shows that $T$
can not be a finite number, i.e. \myref{interval cond} is true for
all time. This implies that $\|u\|_{H^s(\Omega)}$ and
$\|v\|_{H^s(\Omega)}$ are bounded for all time. Moreover, we have
shown that $\|u\|_{L^\infty}\le
\|u_0\|_{L^\infty}e^{-7t},\;\|u\|_{H^s(\Omega)}\le
\|u_0\|_{H^s(\Omega)}e^{-7t}$ and $\|v\|_{H^s(\Omega)}\le
\|v_0\|_{H^s}+C_s\|u_0\|_{H^s}$.
\end{proof}

  \renewcommand{\theequation}{A-\arabic{equation}}
  \setcounter{equation}{0}  
  \section*{Appendix A. }

\begin{proof} \textbf{of Lemma \ref{reisz}}
We present the proof for the case of $a = \pi$. The case of
$a \neq \pi$ can be proved similarly.
First, we perform the sine transform
along $x_1$ and $x_2$ directions to both sides of \myref{eqn-Laplace}.
We have
\begin{eqnarray}
\label{eqn-Laplace-trans}
|k|^2\hat{v}(k,z)-\hat{v}_{zz}(k,z)=\hat{f}(k,z),
\end{eqnarray}
where $k=(k_1,k_2)$, $|k| = \sqrt{k_1^2+k_2^2}$,
and the sine transform of $v$ is defined as follows:
\begin{eqnarray}
\label{sin-tranf}
\hat{v}(k,z)=\left(\frac{2}{\pi}\right)^2
\int_0^{\pi} \int_0^{\pi} v(x_1,x_2,z)\sin
(k_1  x_1) \sin ( k_2 x_2) dx_1 dx_2 .
\end{eqnarray}
Applying the sine transform to the boundary condition gives
\begin{eqnarray}
\label{ODE-BC}
(\hat{v}_z(k,z)+\beta\hat{v}(k,z))|_{z=0}=0.
\end{eqnarray}
The second order ODE \myref{eqn-Laplace-trans}
can be solved analytically.
The general solution is given by
\begin{eqnarray}
\label{vhat}
\hat{v}(k,z)=\frac{e^{|k|z}}{|k|}\left(-\frac{1}{2}\int_0^z\hat{f}
e^{-|k|z'}dz'+C_1(k)\right)+\frac{e^{-|k|z}}{|k|}
\left(\frac{1}{2}\int_0^z\hat{f} e^{|k|z'}dz'+C_2(k)\right) .
\end{eqnarray}
The boundary condition \myref{ODE-BC} and the constraint that
$v \in L^2(\Omega)$ determine the constants
$C_1$ and $C_2$ uniquely as follows:
\begin{eqnarray}
\label{C1-C2}
C_1(k)=\frac{1}{2}\int_0^\infty\widehat{f}(k,z) e^{-|k|z'}dz',
\quad C_2(k)=\frac{|k|+\beta}{|k|-\beta}C_1(k) .
\end{eqnarray}
Let $\chi(x)$ be the characteristic function
\begin{eqnarray}
\chi(x)=\left\{\begin{array}{ll}
0,& x\le0,\\
1,&x> 0.
\end{array}\right.
\end{eqnarray}
Then $\widehat{v}(k,z)$ has the following integral representation
(note that $\beta \neq |k|$ by our assumption):
\begin{eqnarray}
\widehat{v}(k,z)&=&-\frac{1}{2|k|}\int_0^\infty\widehat{f} (k,z)
e^{-|k|(z'-z)}\chi(z'-z)dz'+\frac{1}{2|k|}\int_0^\infty
\widehat{f} (k,z) e^{-|k|(z-z')}\chi(z-z')dz'\nonumber\\
&&+\frac{|k|+\beta}{|k|(|k|-\beta)}\int_0^\infty\widehat{f}(k,z)
e^{-|k|(z+z')}dz'\nonumber\\
&=&-\frac{1}{2|k|}\int_0^\infty\widehat{f}(k,z) K_1(z'-z)dz'+
\frac{1}{2|k|}\int_0^\infty\widehat{f}(k,z) K_1(z-z')dz'\nonumber \\
&&
\label{vhat-integral}
+ \frac{|k|+\beta}{|k|(|k|-\beta)}\int_0^\infty\widehat{f} (k,z)
K_2(z+z')dz',\quad \quad \quad
\end{eqnarray}
where $K_1(z)=e^{-|k|z}\chi(z),\; K_2(z)=e^{-|k|z}$.
Using Young's inequality (see e.g. page 232 of \cite{Foland84}), we obtain:
\begin{eqnarray}
\label{vhat l2}
\|\widehat{v}(k,\cdot)\|_{L^2[0,\infty)}&\le& \frac{1}{2|k|}
\left(2\|K_1\|_{L^1[0,\infty)}+
2\left|\frac{|k|+\beta}{|k|-\beta}\right|\|K_2\|_{L^1[0,\infty)}
\right)\|\widehat{f}(k,\cdot)\|_{L^2[0,\infty)}\nonumber\\
&\le&
\frac{1}{|k|^2}\left(1+\left|\frac{|k|+\beta}{|k|-\beta}
\right|\right)\|\widehat{f}(k,\cdot)\|_{L^2[0,\infty)}
\le \frac{M}{|k|^2}\|\widehat{f}(k,\cdot)\|_{L^2[0,\infty)} ,
\end{eqnarray}
where $\D M=\max_{k_1, k_2>0}\left(1+\left|\frac{|k|+\beta}{|k|-\beta}\right|\right)<\infty$
since $\beta \neq |k| $ for any $k\in \mathbb{Z}^2$ by our assumption.

Next, we estimate $\widehat{v}_z(k,z)$. Differentiating \myref{vhat} with
respect to $z$, we get
\begin{eqnarray}
\widehat{v}_z(k,z)&=&-\frac{1}{2}\int_z^\infty\widehat{f} (k,z)
e^{-|k|(z'-z)}dz'-\frac{1}{2}\int_0^z\widehat{f}(k,z) e^{-|k|(z-z')}dz'
\nonumber \\
&&-
\frac{|k|+\beta}{|k|-\beta}\int_0^\infty\widehat{f}(k,z) e^{-|k|(z+z')}dz'.
\end{eqnarray}
Following the same procedure as in our estimate for $\widehat{v}(k,z)$,
we obtain a similar estimate for $\widehat{v}_z(k,z)$:
\begin{eqnarray}
\label{vzhat l2}
\|\widehat{v}_z(k,\cdot)\|_{L^2[0,\infty)}\le
\frac{1}{|k|}\left(1+\left|\frac{|k|+\beta}{|k|-\beta}
\right|\right)\|\widehat{f}(k,\cdot)\|_{L^2[0,\infty)}\le
\frac{M}{|k|}
\|\widehat{f}(k,\cdot)\|_{L^2[0,\infty)}.
\end{eqnarray}
Let $\al=(\al_1,\al_2,\al_3)$ with $\al_j \ge 0$ ($j=1,2,3$).
We will prove
$\|\p^{\al} v\|^2_{L^2}\leq M^2 \|f\|^2_{H^{|\al|-2}}$ for
all $|\al| \ge 2$. We will prove this using an induction argument
on $\al_3$. First, we establish this estimate for $\al_3=0$
and $\al_1+\al_2 \ge 2$. Below we use the case of
$\al_1\geq 1$ and $\al_2\geq 1$ as an example to illustrate
the main idea. By using the Parseval equality and \myref{vhat l2},
we obtain
\begin{eqnarray}
\|\p^{\al} v\|^2_{L^2(\Omega)}&=&
\sum_{k_1=1}^\infty
\sum_{k_2=1}^\infty k_1^{2\al_1}k_2^{2\al_2}
\int_0^\infty |\widehat{v}(k,z)|^2 dz \nonumber \\
&=& \sum_{k_1=1}^\infty
\sum_{k_2=1}^\infty k_1^{2\al_1}k_2^{2\al_2}
\|\widehat{v}(k,\cdot)\|_{L^2[0,\infty)}^2\nonumber\\
&\le&\sum_{k_1=1}^\infty \sum_{k_2=1}^\infty
 M^2k_1^{2\al_1}k_2^{2\al_2} |k|^{-4}
\|\widehat{f}(k,\cdot)\|^2_{L^2[0,\infty)}\nonumber\\
&\le&M^2\sum_{k_1=1}^\infty \sum_{k_2=1}^\infty
\|k_1^{\al_1-1}k_2^{\al_2-1}\widehat{f}(k,\cdot)\|^2_{L^2[0,\infty)}\nonumber\\
\label{estimate-A11}
&=&M^2\|\p_x^{\al_1-1}\p_y^{\al_2-1} f\|^2_{L^2(\Omega)}\le
M^2 \|f\|^2_{H^{|\al|-2}(\Omega)}.
\end{eqnarray}
Similarly, we can prove \myref{estimate-A11} for $\al_3=0$ and
$\al_1 +\al_2 \ge 2$ by distributing the appropriate order of
derivatives to $x_1$ and/or $x_2$ direction.

Using \myref{vzhat l2} and following the same procedure as
in the proof of \myref{estimate-A11}, we can prove
\myref{estimate-A11} for the case of $\al_3 =1 $ and
$\al_1 + \al_2 \ge 1$. Finally, using
\myref{eqn-Laplace-trans} and differentiating
\myref{eqn-Laplace-trans} with respect to $z$ as many times
as needed, we can prove
\begin{equation}
\label{estimate-A12}
\|\p^{\al} v\|^2_{L^2(\Omega)} \le C_\al \|f\|^2_{H^{|\al|-2}(\Omega)},
\end{equation}
for all $\al_3 \ge 2 $ and $\al_1+\al_2 \ge 0$ by using
an induction argument and \myref{estimate-A11} for
$\al_3 =0 $ and $\al_3=1$.
Using \myref{estimate-A12} and \myref{vhat-integral}, we obtain
\begin{eqnarray}
\|v\|_{H^s(\Omega)}\le C_s\|f\|_{H^{s-2}(\Omega)},
\end{eqnarray}
for all $s \ge 2$, where $C_s$ is a constant depending only on $s$.
The uniqueness of the solution follows from the solution formula
\myref{vhat} and \myref{C1-C2}.
This completes the proof of Lemma \ref{reisz}.
\end{proof}

\vspace{0.1in} \noindent {\bf Acknowledgments} Dr. T. Hou would
like to acknowledge NSF for their generous support through the
Grants DMS-0713670 and DMS-0908546. The work of Drs. Z. Shi and S.
Wang was supported in part by the NSF grant DMS-0713670. The
research of Dr. S. Wang was also supported by China 973
Program(Grant no. 2011CB808002), the Grants NSFC 11071009 and
PHR-IHLB 200906103. This work was done during Dr. Shu Wang's visit
to ACM at Caltech. He would like to thank Prof. T. Hou and Caltech
for their hospitality during his visit. We would like to thank
Profs. Joseph Keller, Congming Li, Xinwei Yu, and the referee
for their comments which help improve the quality of this work.

\end{document}